\def\R{\mathbb R}
\newtheorem{theorem}{Theorem}[section]
\newtheorem{lemma}[theorem]{Lemma}
\newtheorem{proposition}[theorem]{Proposition}
\newtheorem{remark}[theorem]{Remark}
\newenvironment{proof}[1][Proof]{\noindent\textbf{#1.} }
{\ \rule{0.5em}{0.5em}}
\numberwithin{equation}{section}
\begin{document}

\title{Global $L^{p}$ estimates for degenerate Ornstein-Uhlenbeck operators with
variable coefficients\thanks{\textbf{Keywords:} Ornstein-Uhlenbeck operators.
Global $L^{p}$ estimates. Hypoelliptic operators. Singular integrals.
Nondoubling spaces. \textbf{Mathematics subject classification (2000)}:
Primary 35H10. Secondary 35B45 - 35K70 - 42B20.}}
\author{Marco Bramanti\\{\normalsize Dip. di Matematica, Politecnico di Milano}\\{\normalsize Via Bonardi 9, 20133 Milano, Italy}\\{\normalsize marco.bramanti@polimi.it}
\and Giovanni Cupini, \ \ Ermanno Lanconelli\\{\normalsize Dip. di Matematica, Universit\`{a} di Bologna}\\{\normalsize Piazza di Porta S. Donato 5, 40126 Bologna, Italy}\\{\normalsize giovanni.cupini@unibo.it, ermanno.lanconelli@unibo.it}
\and Enrico Priola\\{\normalsize Dip. di Matematica, Universit\`{a} di Torino}\\{\normalsize via Carlo Alberto 10, 10123 Torino, Italy}\\{\normalsize enrico.priola@unito.it}}
\maketitle

\begin{abstract}
We consider a class of degenerate  Ornstein-Uhlenbeck operators in
$\mathbb{R}^{N}$, of the kind%
\[
\mathcal{A}\equiv\sum_{i,j=1}^{p_{0}}a_{ij}\left(  x\right)  \partial
_{x_{i}x_{j}}^{2}+\sum_{i,j=1}^{N}b_{ij}x_{i}\partial_{x_{j}}%
\]
where $\left(  a_{ij}\right)  $ is symmetric uniformly positive
definite on $\mathbb{R}^{p_{0}}$ ($p_{0}\leq N$), with uniformly
continuous  and bounded entries, and $\left(  b_{ij}\right)  $
is a constant matrix such that the frozen operator
$\mathcal{A}_{x_{0}}$ corresponding to $a_{ij}\left(  x_{0}\right) $
is hypoelliptic. For this class of operators we prove global $L^{p}$
estimates
($1<p<\infty$) of the kind:%
\[
\left\Vert \partial_{x_{i}x_{j}}^{2}u\right\Vert _{L^{p}\left(  \mathbb{R}%
^{N}\right)  }\leq c\left\{  \left\Vert \mathcal{A}u\right\Vert _{L^{p}\left(
\mathbb{R}^{N}\right)  }+\left\Vert u\right\Vert _{L^{p}\left(  \mathbb{R}%
^{N}\right)  }\right\}  \text{ for }i,j=1,2,...,p_{0}.
\]
We obtain the previous estimates as a byproduct of the following one, which is
of interest in its own:%
\[
\left\Vert \partial_{x_{i}x_{j}}^{2}u\right\Vert _{L^{p}\left(  S_{T}\right)
}\leq c\left\{  \left\Vert Lu\right\Vert _{L^{p}\left(  S_{T}\right)
}+\left\Vert u\right\Vert _{L^{p}\left(  S_{T}\right)  }\right\}
\]
for any $u\in C_{0}^{\infty}\left(  S_{T}\right)  ,$ where $S_{T}$ is the
strip $\mathbb{R}^{N}\times\left[  -T,T\right]  $, $T$ small, and $L$ is the
Kolmogorov-Fokker-Planck operator%
\[
L\equiv\sum_{i,j=1}^{p_{0}}a_{ij}\left(  x,t\right)  \partial_{x_{i}x_{j}}%
^{2}+\sum_{i,j=1}^{N}b_{ij}x_{i}\partial_{x_{j}}-\partial_{t}%
\]
with uniformly continuous and bounded $a_{ij}$'s.
\end{abstract}

\section{Introduction}

Let us consider the following kind of Ornstein-Uhlenbeck operators:%
\begin{equation} \label{ou1}
\mathcal{A}=\sum_{i,j=1}^{p_{0}}a_{ij}\left(  x\right)
 \partial_{x_{i}x_{j}%
}^{2}+\sum_{i,j=1}^{N}b_{ij}x_{i}\partial_{x_{j}},
\end{equation}
where:

$A_{0}=\left(  a_{ij}\left(  x\right)  \right)  _{i,j=1}^{p_{0}}$ is a
$p_{0}\times p_{0}$ ($p_{0}\leq N$) symmetric, bounded and uniformly positive
definite matrix:%
\begin{equation}
\frac{1}{\Lambda}\left\vert \xi\right\vert ^{2}\leq\sum_{i,j=1}^{p_{0}}%
a_{ij}\left(  x\right)  \xi_{i}\xi_{j}\leq\Lambda\left\vert \xi\right\vert
^{2} \label{ellitpicity}%
\end{equation}
for all $\xi\in\mathbb{R}^{p_{0}},$ $x\in\mathbb{R}^{N}$ and for some constant
$\Lambda\geq1$;

the entries $a_{ij}$ are supposed to be uniformly continuous functions on
$\mathbb{R}^{N}$, with a modulus of continuity
\begin{equation}
\omega\left(  r\right)  =\max_{i,j=1,...,p_{0}}\sup_{\substack{x,y\in
\mathbb{R}^{N}\\\left\vert x-y\right\vert \leq r}}\left\vert a_{ij}\left(
x\right)  -a_{ij}\left(  y\right)  \right\vert ;\label{w}%
\end{equation}

the constant matrix $B=\left(  b_{ij}\right)  _{i,j=1}^{N}$ has the following
structure:%
\begin{equation}
B=%
\begin{bmatrix}
\ast & B_{1} & 0 & \ldots & 0\\
\ast & \ast & B_{2} & \ldots & 0\\
\vdots & \vdots & \vdots & \ddots & \vdots\\
\ast & \ast & \ast & \ldots & B_{r}\\
\ast & \ast & \ast & \ldots & \ast
\end{bmatrix}
\label{B}%
\end{equation}
where $B_{j}$ is a $p_{j-1}\times p_{j}$ block with rank $p_{j},j=1,2,...,r$,
$p_{0}\geq p_{1}\geq...\geq p_{r}\geq1$, $p_{0}+p_{1}+...+p_{r}=N$ and the
symbols $\ast$ denote completely arbitrary blocks.

If the $a_{ij}$'s are constant, the above assumptions imply that the operator
$\mathcal{A}$ is hypoelliptic (although degenerate, as soon as $p_{0}<N$), see
\cite{LP}. If the $a_{ij}$'s are just uniformly continuous, $\mathcal{A}$ is a
nonvariational degenerate elliptic operator with continuous coefficients,
structured on a hypoelliptic operator. For this class of operators, we shall
prove the following global $L^{p}$ estimates:

\begin{theorem}
\label{Thm main}For every $p\in\left(  1,\infty\right)  $ there exists a
constant $c>0,$ depending on $p,N,p_{0}$, the matrix $B,$ the number $\Lambda$
in (\ref{ellitpicity}) and the modulus $\omega$ in (\ref{w}) such that for
every $u\in C_{0}^{\infty}\left(  \mathbb{R}^{N}\right)  $ one has:
\begin{align}
\sum_{i,j=1}^{p_{0}}\left\Vert \partial_{x_{i}x_{j}}^{2}u\right\Vert
_{L^{p}\left(  \mathbb{R}^{N}\right)  }  &  \leq c\left\{  \left\Vert
\mathcal{A}u\right\Vert _{L^{p}\left(  \mathbb{R}^{N}\right)  }+\left\Vert
u\right\Vert _{L^{p}\left(  \mathbb{R}^{N}\right)  }\right\}
,\label{stima L_0 a}\\
\left\Vert \sum_{i,j=1}^{N}b_{ij}x_{i}\partial_{x_{j}}u\right\Vert
_{L^{p}\left(  \mathbb{R}^{N}\right)  }  &  \leq c\left\{  \left\Vert
\mathcal{A}u\right\Vert _{L^{p}\left(  \mathbb{R}^{N}\right)  }+\left\Vert
u\right\Vert _{L^{p}\left(  \mathbb{R}^{N}\right)  }\right\}  .
\label{stima L_0 b}%
\end{align}

\end{theorem}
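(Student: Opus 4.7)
The plan is to follow the reduction sketched in the abstract: first establish the parabolic $L^{p}$ estimate for the time-dependent Kolmogorov-Fokker-Planck operator $L$ on a narrow strip $S_{T}$ (with $T$ small), then deduce the stationary estimate for $\mathcal{A}$ by a tensorization argument. Concretely, given $u\in C_{0}^{\infty}(\mathbb{R}^{N})$ and a fixed non-zero $\phi\in C_{0}^{\infty}((-T,T))$, the function $v(x,t)=u(x)\phi(t)$ lies in $C_{0}^{\infty}(S_{T})$ with $Lv=\phi(t)\mathcal{A}u(x)-\phi'(t)u(x)$ and $\partial_{x_{i}x_{j}}^{2}v=\phi(t)\,\partial_{x_{i}x_{j}}^{2}u(x)$; applying the parabolic estimate and factoring out $\|\phi\|_{p}$ and $\|\phi'\|_{p}$ yields (\ref{stima L_0 a}). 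Estimate (\ref{stima L_0 b}) then follows algebraically by writing $\sum b_{ij}x_{i}\partial_{x_{j}}u=\mathcal{A}u-\sum a_{ij}\partial_{x_{i}x_{j}}^{2}u$ and invoking the boundedness of the $a_{ij}$'s together with (\ref{stima L_0 a}).

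The core of the work is therefore the parabolic estimate on $S_{T}$. I would first treat the frozen operator $L_{0}$ obtained by fixing $a_{ij}(x_{0},t_{0})$. By the structural assumption on $B$ and the hypoellipticity results of \cite{LP}, $L_{0}$ is left-invariant on a homogeneous Lie group $(\mathbb{R}^{N+1},\circ,\delta_{\lambda})$ and admits an explicit Gaussian-type fundamental solution $\Gamma_{0}$ which is $\delta_{\lambda}$-homogeneous of the correct degree. Writing $u=\Gamma_{0}\ast(L_{0}u)$ (convolution on the group) and differentiating twice in the $x_{i}$, $i\le p_{0}$, variables yields a representation
\[
\partial_{x_{i}x_{j}}^{2}u = T_{ij}(L_{0}u)+c_{ij}\,L_{0}u,
\]
where $T_{ij}$ is a singular integral operator with kernel $\partial_{x_{i}x_{j}}^{2}\Gamma_{0}$, homogeneous of the critical degree with respect to $\delta_{\lambda}$. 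One must establish $L^{p}$ boundedness of these operators.

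The main obstacle is precisely this $L^{p}$ continuity: the quasi-distance induced by $\delta_{\lambda}$ does not make the Lebesgue measure on $\mathbb{R}^{N+1}$ doubling (this is the non-doubling feature flagged in the keywords), so classical Calder\'on-Zygmund theory on spaces of homogeneous type does not apply directly. I would verify that $\partial_{x_{i}x_{j}}^{2}\Gamma_{0}$ satisfies the standard size, Hörmander cancellation and vanishing-integral conditions with respect to the quasi-metric balls, and invoke a singular integral theory adapted to the non-doubling setting (of Tolsa / Nazarov-Treil-Volberg type), or the variants previously developed for Kolmogorov kernels. Once $L^{p}$ continuity for the constant-coefficient operators is in hand, the variable-coefficient parabolic estimate follows by the standard freezing technique: split $L=L_{(x_{0},t_{0})}+\sum(a_{ij}(x,t)-a_{ij}(x_{0},t_{0}))\partial_{x_{i}x_{j}}^{2}$, control the perturbation on a small quasi-ball by the modulus $\omega$, and patch via a partition of unity adapted to the group dilations, choosing the scale and $T$ small enough to absorb the error terms. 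The drift $\sum b_{ij}x_{i}\partial_{x_{j}}$ is kept unfrozen throughout (its coefficients are unbounded and do not admit local approximation by constants); commutators of the cutoffs with the drift only generate lower-order terms that are absorbed into $\|u\|_{L^{p}}$.
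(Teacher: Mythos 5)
Your reduction of \eqref{stima L_0 a}--\eqref{stima L_0 b} to the parabolic strip estimate is exactly the paper's proof of Theorem~\ref{Thm main}: tensorize $u(x)$ with a temporal cutoff $\psi\in C_0^\infty((-T,T))$, apply \eqref{bound strip} to $U(x,t)=u(x)\psi(t)$, factor out the $t$-integral, and obtain \eqref{stima L_0 b} by difference using the boundedness of the $a_{ij}$.

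In your sketch of the strip estimate itself, however, there is a genuine imprecision that sits at the technical heart of the argument. You assert that the fundamental solution of the frozen operator $L_{z_0}$ is $\delta_\lambda$-homogeneous and that $(\mathbb{R}^{N+1},\circ,\delta_\lambda)$ is a homogeneous Lie group. This is false as soon as $B$ contains nontrivial $\ast$-blocks: the dilations $\delta(\lambda)$ are automorphisms only of the group generated by the principal-part drift $B_0$, so the fundamental solution $\gamma(z_0;\cdot)$ of $L_{z_0}$ is \emph{not} homogeneous; only $\gamma_0(z_0;\cdot)$, the fundamental solution of the principal-part operator $K_{z_0}$, is. (Indeed, if your $\Gamma_0$ really were $\delta_\lambda$-homogeneous on a genuine homogeneous group, Lebesgue measure would be globally doubling for the associated quasidistance, which contradicts the non-doubling feature you correctly flag in the next sentence.) The paper resolves this tension precisely by writing the representation formula in terms of the full $\gamma(z_0;\cdot)$, introducing the cutoff $\eta$ to split $\partial^2_{x_ix_j}\gamma$ into a compactly supported singular part $k_0=\eta\,\partial^2\gamma$ --- whose $L^p$ boundedness is obtained from the abstract nonhomogeneous-space result of \cite{B}, after checking via the Appendix estimates that $\gamma$ is uniformly comparable to the homogeneous $\gamma_0$ near the origin --- and an integrable tail $k_\infty=(1-\eta)\partial^2\gamma$, and by working with the merely \emph{local} quasidistance $d(z,\zeta)=\Vert\zeta^{-1}\circ z\Vert$. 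Your sketch, by presuming exact homogeneity, skips over the whole $k_0/k_\infty$ decomposition and the uniform $\gamma$--$\gamma_0$ comparison, which are where the real work lies; with that corrected, your outline matches the paper's strategy (freezing, representation formula, singular integrals in a nondoubling setting, covering lemma, interpolation to absorb the first-order terms).
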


In \cite{BCLP} we have proved this result in the case of constant coefficients
$a_{ij}$. Here we show that exploiting results and techniques contained in
\cite{BCLP}, together with a careful inspection of the quantitative dependence
of some bounds proved in \cite{LP} and \cite{DP}, we can get Theorem
\ref{Thm main}. The striking feature of our result is twofold. On the one
side, the merely uniform continuity of the coefficients $a_{ij}(x)$; on the
other side the lack of a Lie group structure making translation invariant the
frozen operator
\[
\mathcal{A}_{x_{0}}=\sum_{i,j=1}^{p_{0}}a_{ij}\left(  x_{0}\right)
\partial_{x_{i}x_{j}}^{2}+\sum_{i,j=1}^{N}b_{ij}x_{i}\partial_{x_{j}},\qquad
x_{0}\in\mathbb{R}^{N}.
\]
As in \cite{BCLP}, we overcome this last difficulty by considering the
operator $\mathcal{A}$ as the stationary counterpart of the corresponding
evolution operator $\mathcal{A}-\partial_{t}$ and looking for the estimates
\eqref{stima L_0 a} and \eqref{stima L_0 b} as a consequence of analogous
estimates for $\mathcal{A}-\partial_{t}$ on a suitable strip $S_{T}%
=\mathbb{R}^{N}\times\left[  -T,T\right]  $.

 There exists a quite extensive literature related to global
$L^p$ estimates for non-degenerate elliptic and parabolic equations
on the whole space with unbounded lower order coefficients and
variable coefficients $a_{ij}$. The considered $L^p$-spaces are
defined with respect to Lebesgue measure or with respect to an
invariant measure which has  also a probabilistic interpretation
(see, for instance, \cite{CV}, \cite{CF}, \cite{DG}, \cite{K},
\cite{FF}, \cite{MPSR}, \cite{PRS} and the references therein).

On the other hand,  to the best of our knowledge, only
the papers \cite{BB1}, \cite{BCM} and \cite{BZ} deal with $L^p$ estimates for classes of {\em degenerate}
   operators with both
unbounded first order coefficients and bounded variable coefficients
$a_{ij}$. However, we want to stress that the estimates there proved are only of {\em local} type.

   We also  mention that  global $L^p$ estimates like \eqref{stima L_0 a}
    are crucial
in establishing  weak uniqueness theorems for associated stochastic
differential equations, see \cite{P} and the references therein.
 Finally,  a priori estimates in non-isotropic
  H\"older spaces for
 operators  like \eqref{ou1}   with H\"older
continuous $a_{ij}$ were proved by A. Lunardi in \cite{Lu}.

\section{Notations and preliminary results}

\subsection*{The operator $L$}

Let us introduce the evolution operator
\begin{align}
Lu(z)  &  =\sum_{i,j=1}^{p_{0}}a_{ij}\left(  z\right)  \partial_{x_{i}x_{j}%
}^{2}u(z)+\sum_{i,j=1}^{N}b_{ij}x_{i}\partial_{x_{j}}u(z)-\partial
_{t}u(z)\nonumber\\
&  =\sum_{i,j=1}^{p_{0}}a_{ij}\left(  z\right)  \partial_{x_{i}x_{j}}%
^{2}u(z)+\langle x,B\nabla u(z)\rangle-\partial_{t}u(z) \label{L}%
\end{align}
with $z=(x,t)$ in $\mathbb{R}^{N+1}$, where now the coefficients $a_{ij}$
possibly depend also on $t$. When the $a_{ij}$'s are time independent, we get
$L=\mathcal{A}-\partial_{t}$. Let%
\[
A(z)=%
\begin{bmatrix}
A_{0}(z) & 0\\
0 & 0
\end{bmatrix}
\]
be an $N\times N$ matrix where $A_{0}(z)=\left(  a_{ij}\left(  z\right)
\right)  _{i,j=1}^{p_{0}}$ is a $p_{0}\times p_{0}$ ($p_{0}\leq N$) symmetric
and uniformly positive definite matrix for all $z$, satisfying
\begin{equation}
\frac{1}{\Lambda}\left\vert \xi\right\vert ^{2}\leq\sum_{i,j=1}^{p_{0}}%
a_{ij}\left(  z\right)  \xi_{i}\xi_{j}\leq\Lambda\left\vert \xi\right\vert
^{2} \label{ellipticity2}%
\end{equation}
for all $\xi\in\mathbb{R}^{p_{0}}$ and for some constant $\Lambda\geq1$.

Moreover, we assume that the functions $a_{ij}$ are uniformly continuous in
$\mathbb{R}^{N+1}$ with modulus of continuity
\begin{equation}
\omega\left(  r\right)  =\max_{i,j=1,...,p_{0}}\sup_{\substack{z_{1},z_{2}%
\in\mathbb{R}^{N+1}\\\left\vert z_{1}-z_{2}\right\vert \leq r}}\left\vert
a_{ij}\left(  z_{1}\right)  -a_{ij}\left(  z_{2}\right)  \right\vert .
\label{ww}%
\end{equation}

\subsection*{The operator $L_{z_{0}}$}

For a fixed $z_{0}\in\mathbb{R}^{N+1}$ we consider the operator $L_{z_{0}}$
that differs from $L$ only for the coefficients $a_{ij}$'s, that now are
constant coefficients:
\[
L_{z_{0}}u(z)=\sum_{i,j=1}^{p_{0}}a_{ij}\left(  z_{0}\right)  \partial
_{x_{i}x_{j}}^{2}u(z)+\left\langle x,B\nabla u(z)\right\rangle -\partial
_{t}u(z),
\]
where, as above, $z=\left(  x,t\right)  $.

This operator is hypoelliptic; actually it can be proved (see \cite{LP}) that
this fact is equivalent to the validity of the condition $C(z_{0};t)>0$ for
every $t>0$, where
\[
C\left(  z_{0};t\right)  =\int_{0}^{t}E\left(  s\right)  A\left(
z_{0}\right)  E^{T}\left(  s\right)  ds\text{, where }E\left(  s\right)
=\exp\left(  -sB^{T}\right)  .
\]
Moreover, it is proved in \cite{LP} that $L_{z_{0}}$ is left-invariant with
respect to the composition law%
\[
\left(  x,t\right)  \circ\left(  \xi,\tau\right)  =\left(  \xi+E\left(
\tau\right)  x,t+\tau\right)  .
\]
Note that%
\[
\left(  \xi,\tau\right)  ^{-1}=\left(  -E\left(  -\tau\right)  \xi
,-\tau\right)  .
\]
We explicitly note that such a composition law is independent of $z_{0}$,
since only the matrix $B$ is involved.

The operator $L_{z_{0}}$ has a fundamental solution $\Gamma(z_{0};\cdot
,\cdot)$,
\[
\Gamma\left(  z_{0};z,\zeta\right)  =\gamma\left(  z_{0};\zeta^{-1}\circ
z\right)  \text{ for }z,\zeta\in\mathbb{R}^{N+1},
\]
with%
\begin{equation}
\gamma\left(  z_{0};z\right)  =\left\{
\begin{tabular}
[c]{ll}%
$0$ & $\text{for }t\leq0$\\
$\frac{\left(  4\pi\right)  ^{-N/2}}{\sqrt{\det C\left(  z_{0};t\right)  }%
}\exp\left(  -\frac{1}{4}\left\langle C^{-1}\left(  z_{0};t\right)
x,x\right\rangle -t\text{Tr}B\right)  $ & $\text{for }t>0$%
\end{tabular}
\ \ \right. \nonumber
\end{equation}
where $z=\left(  x,t\right)  $.

\subsection*{The operator $K_{z_{0}}$}

By \textit{principal part }of $L_{z_{0}}$ we mean the operator%
\[
K_{z_{0}}=\sum_{i,j=1}^{p_{0}}a_{ij}\left(  z_{0}\right)  \partial_{x_{i}%
x_{j}}^{2}+\left\langle x,B_{0}\nabla\right\rangle -\partial_{t},
\]
where the matrix in the drift term is now $B_{0}$, obtained by annihilating
every $\ast$ block in (\ref{B}):%
\[
B_{0}=%
\begin{bmatrix}
0 & B_{1} & 0 & \ldots & 0\\
0 & 0 & B_{2} & \ldots & 0\\
\vdots & \vdots & \vdots & \ddots & \vdots\\
0 & 0 & 0 & \ldots & B_{r}\\
0 & 0 & 0 & \ldots & 0
\end{bmatrix}
.
\]

The fundamental solution of the principal part operator $K_{z_{0}}$ is
$\Gamma_{0}\left(  z_{0};z,\zeta\right)  =\gamma_{0}\left(  z_{0};\zeta
^{-1}\circ z\right)  $; namely, for $t>0$%
\[
\gamma_{0}\left(  z_{0};z\right)  =\frac{\left(  4\pi\right)  ^{-N/2}}%
{\sqrt{\det C_{0}\left(  z_{0};t\right)  }}\exp\left(  -\frac{1}%
{4}\left\langle C_{0}^{-1}\left(  z_{0};t\right)  x,x\right\rangle \right)
\]
with
\begin{equation}
C_{0}\left(  z_{0};t\right)  =\int_{0}^{t}E_{0}\left(  s\right)  A(z_{0}%
)E_{0}^{T}\left(  s\right)  ds\text{, where }E_{0}\left(  s\right)
=\exp\left(  -sB_{0}^{T}\right)  . \label{C_0}%
\end{equation}

\subsubsection*{Homogeneous dimension, norm and distance}

For every $\lambda>0,$ let us define the matrix of \textit{dilations on
}$\mathbb{R}^{N},$%
\[
D\left(  \lambda\right)  =\operatorname{diag}\left(  \lambda I_{p_{0}}%
,\lambda^{3}I_{p_{1}},...,\lambda^{2r+1}I_{p_{r}}\right)
\]
where $I_{p_{j}}$ denotes the $p_{j}\times p_{j}$ identity matrix, and the
matrix of \textit{dilations} on $\mathbb{R}^{N+1}$,
\[
\delta\left(  \lambda\right)  =\left(  D(\lambda),\lambda^{2}\right)
=\operatorname{diag}\left(  \lambda I_{p_{0}},\lambda^{3}I_{p_{1}}%
,...,\lambda^{2r+1}I_{p_{r}},\lambda^{2}\right)  .
\]
Note that
\[
\det\left(  D\left(  \lambda\right)  \right)  =\lambda^{Q},\qquad\det\left(
\delta\left(  \lambda\right)  \right)  =\lambda^{Q+2}%
\]
with $Q=p_{0}+3p_{1}+...+\left(  2r+1\right)  p_{r}$; \ $Q$ and $Q+2$ are
called the \textit{homogeneous dimension} of $\mathbb{R}^{N}$ and
$\mathbb{R}^{N+1}$, respectively. The operator $K_{z_{0}}$ is homogeneous of
degree two with respect to these dilations.

There is a natural \textit{homogeneous norm} in $\mathbb{R}^{N+1},$ induced by
these dilations:%
\[
\left\Vert \left(  x,t\right)  \right\Vert =\sum_{j=1}^{N}\left\vert
x_{j}\right\vert ^{1/q_{j}}+\left\vert t\right\vert ^{1/2},
\]
where $q_{j}$ are positive integers such that $D\left(  \lambda\right)
=\operatorname{diag}\left(  \lambda^{q_{1}},...,\lambda^{q_{N}}\right)  .$
Clearly, we have%
\[
\left\Vert \delta\left(  \lambda\right)  z\right\Vert =\lambda\left\Vert
z\right\Vert \text{ \ \ for every }\lambda>0,z\in\mathbb{R}^{N+1}.
\]
A key geometrical object is the \textit{local quasisymmetric quasidistance}%
\ $d$. Namely,
\[
d\left(  z,\zeta\right)  =\left\Vert \zeta^{-1}\circ z\right\Vert .
\]
Note that the homogeneous norm involved in the definition of $d$ is related to
the principal part operator $K_{z_{0}}$, while the group law $\circ$ is
related to the original operator $L_{z_{0}}$. Hence this function $d$ is not a
usual quasidistance on a homogeneous group. The function $d\left(
z,\zeta\right)  $ satisfies the quasisymmetric and quasitriangle inequalities
only for $d\left(  z,\zeta\right)  $ bounded (see Lemma 2.1 in \cite{DP});
this happens for instance on a fixed $d$-ball $B_{\rho}(z)$, where
\[
B_{\rho}(z)=\left\{  \zeta\in\mathbb{R}^{N+1}\,:\,d\left(  z,\zeta\right)
<\rho\right\}  .
\]

\section{Estimates on a strip for evolution operators}

Let $S_{T}$ be the strip $\mathbb{R}^{N}\times\lbrack-T,T]$. We use $c$ to
denote constants that may vary from line to line.

Our main result in this section is the following:

\begin{theorem}
\label{Thm main thm strip} Let $L$ be as in (\ref{L}), with the matrix $B$
satisfying (\ref{B}) and with uniformly continuous coefficients $a_{ij}$
satisfying (\ref{ellipticity2}).

For every $p\in\left(  1,\infty\right)  $ there exist constants $c,T>0$
depending on $p,N,p_{0}$, the matrix $B$, the number $\Lambda$ in
(\ref{ellipticity2}), $c$ also depending on the modulus of continuity $\omega$
in (\ref{ww}) such that%
\begin{equation}
\sum_{i,j=1}^{p_{0}}\left\Vert \partial_{x_{i}x_{j}}^{2}u\right\Vert
_{L^{p}\left(  S_{T}\right)  }\leq c\left\{  \left\Vert Lu\right\Vert
_{L^{p}\left(  S_{T}\right)  }+\left\Vert u\right\Vert _{L^{p}\left(
S_{T}\right)  }\right\}  \label{bound strip}%
\end{equation}
for every $u\in C_{0}^{\infty}\left(  S_{T}\right)  $.
\end{theorem}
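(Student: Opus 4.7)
The plan is to reduce \eqref{bound strip} to the constant-coefficient analogue for the frozen operator $L_{z_0}$, already obtained in \cite{BCLP}, and then patch local estimates by means of a partition of unity adapted to the group law $\circ$. In this scheme freezing is performed at every point of a covering of $\mathbb{R}^{N+1}$, so the bound for $L_{z_0}$ must be \emph{uniform in $z_0$}; securing this uniformity is exactly what the introduction calls ``a careful inspection of the quantitative dependence of some bounds proved in \cite{LP} and \cite{DP}''.

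First I would record the constant-coefficient strip estimate from \cite{BCLP}: for every fixed $z_0\in\mathbb{R}^{N+1}$ and every $v\in C_0^\infty(S_T)$,
\[
\sum_{i,j=1}^{p_0}\|\partial^2_{x_ix_j}v\|_{L^p(S_T)}\leq c_0\bigl(\|L_{z_0}v\|_{L^p(S_T)}+\|v\|_{L^p(S_T)}\bigr),
\]
with $c_0$ depending only on $p,N,p_0,B,\Lambda$. The uniformity in $z_0$ is obtained by tracking how the Gaussian-type bounds on $\Gamma(z_0;\cdot,\cdot)$ and on its second $x$-derivatives from \cite{LP}, together with the quasisymmetric and quasitriangle constants for the local quasidistance $d$ from \cite{DP}, depend on the frozen matrix $A_0(z_0)$, and verifying that the dependence is only through $\Lambda$ (and through $B$, which is fixed).

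Next, for a small radius $r>0$ to be chosen, I would cover $\mathbb{R}^{N+1}$ by $d$-balls $B_r(z_k)$ of bounded overlap, constructed as $\circ$-translates of a fixed model ball so that all geometric constants are uniform in $k$. Attaching a subordinate partition of unity $\{\phi_k\}$ in which each $\phi_k$ is a $\circ$-translate of a fixed bump, I would write
\[
L_{z_k}(\phi_k u)=\phi_k Lu+[L,\phi_k]u+\sum_{i,j=1}^{p_0}\bigl(a_{ij}(z_k)-a_{ij}(z)\bigr)\partial^2_{x_ix_j}(\phi_k u),
\]
apply the uniform estimate of the previous step to $v=\phi_k u$ with freezing point $z_k$, bound the last sum by $c_0\,\omega(Cr)\sum_{i,j}\|\partial^2_{x_ix_j}(\phi_k u)\|_{L^p}$, and choose $r$ so small that $c_0\,\omega(Cr)\leq \tfrac12$, in order to absorb it into the left-hand side. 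The commutator $[L,\phi_k]u$ produces first-order terms of the shape $(\partial\phi_k)\,\partial_{x_j}u$ together with an apparently dangerous contribution $u\,\langle x,B\nabla\phi_k\rangle$; because $\phi_k$ is a $\circ$-translate of a fixed bump $\phi_0$ and the drift is left-invariant with respect to $\circ$, this contribution equals $u\,\langle y,B\nabla_y\phi_0\rangle$ in the translated coordinates $y$ and is therefore uniformly bounded in $k$. Summing the local estimates by bounded overlap, and then removing the surviving first-order terms via the one-dimensional interpolation $\|\partial_{x_j}u\|_{L^p}\leq\varepsilon\|\partial^2_{x_jx_j}u\|_{L^p}+C_\varepsilon\|u\|_{L^p}$ applied for each $j\leq p_0$, yields \eqref{bound strip}.

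\emph{Main obstacles.} The genuinely delicate point is the first step: revisiting the constant-coefficient argument of \cite{BCLP} with explicit control over every constant's dependence on $A_0(z_0)$. Without the uniformity of $c_0$ in the freezing point, the patching scheme collapses, because freezing must be performed at arbitrary points of an unbounded covering. A secondary difficulty is that the unbounded drift $\langle x,B\nabla\cdot\rangle$ prevents a naive Euclidean cutoff argument and forces one to work with cutoffs intrinsic to the group $(\mathbb{R}^{N+1},\circ)$, exploiting the left-invariance of $L_{z_k}$ to convert the seemingly unbounded commutator terms into uniformly bounded ones. The smallness of $T$ enters in order to close the argument on the short strip $S_T$, where auxiliary bounds such as $\|u\|_{L^p(S_T)}\leq CT\|\partial_t u\|_{L^p(S_T)}$ are available for $u\in C_0^\infty(S_T)$.
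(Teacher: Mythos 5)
Your architecture is essentially the paper's: freeze at the centers of a covering by $d$-balls, use an estimate for the frozen operator that is uniform in the freezing point, absorb the coefficient-oscillation term by choosing $r$ so small that $c\,\omega(Cr)<1$, control the commutators with $\circ$-translated cutoffs via left-invariance, sum by bounded overlap, and finish with the interpolation inequality. The one structural difference is the engine. You invoke the constant-coefficient strip estimate of \cite{BCLP} for $L_{z_0}$ on the whole strip and apply it to the localized functions $\phi_k u$, while the paper proves a local estimate for the variable-coefficient operator $L$ itself on small balls (Proposition \ref{Prop local bound}), via the explicit representation formula \eqref{repr formula} and a singular-integral bound for the kernel $k_0(z_0;\cdot)$ which is uniform in $z_0$ (Theorem \ref{th5.?}, obtained by checking the hypotheses of the nondoubling Calder\'on--Zygmund theorem of \cite{B}), and only then runs the covering argument (Lemma \ref{Lemma Covering}). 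Both organizations work; yours makes the patching step look more classical, the paper's keeps all the new analysis concentrated in one local proposition.

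Be aware, though, that your phrase ``tracking how the bounds depend on $A_0(z_0)$'' compresses what is in fact the paper's main technical contribution, and it is not only a matter of Gaussian-type pointwise bounds on $\gamma(z_0;\cdot)$ and its derivatives. Those uniform bounds are the Appendix (Theorem \ref{Thm main appendix}), and they require comparing $C(z_0;t)$ and $C_0(z_0;t)$ with the model matrix $\widetilde{C}(t)$ uniformly in $z_0$ for small $t$ --- this is one place where the smallness of $T$ genuinely enters. On top of that, to get an $L^p$ constant independent of $z_0$ one must verify, uniformly in $z_0$, all the hypotheses of the theorem in \cite{B}: the standard-kernel and H\"older-type estimates, the cancellation bound \eqref{boundedness}, and the $C^{\gamma}$ regularity \eqref{holder} of $h-h^{\ast}$. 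In your scheme this work does not disappear; it is relocated inside the unproved claim that the constant of \cite{BCLP} depends on the frozen matrix only through $\Lambda$. Three smaller points: the bounded-overlap covering should be (and only needs to be) of the strip $S_T$, not of all of $\mathbb{R}^{N+1}$, and even on the strip it is a non-trivial lemma because the space is not globally doubling (Lemma \ref{Lemma Covering}); a genuine partition of unity cannot consist of exact $\circ$-translates of a single bump --- the paper instead uses cutoffs $a_k=A(z_k^{-1}\circ\cdot)$ equal to $1$ on the half-radius balls of the covering, which is what your argument really needs; and the left-invariant drift field is $\langle x,B\nabla\rangle-\partial_t$ rather than $\langle x,B\nabla\rangle$ alone, though since both pieces occur together in $L\phi_k$ your uniform bound on the commutator still holds. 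The inequality $\Vert u\Vert_{L^{p}(S_T)}\leq CT\Vert\partial_t u\Vert_{L^{p}(S_T)}$ is not needed anywhere.
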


>From Theorem \ref{Thm main thm strip} one obtains Theorem \ref{Thm main}
proceeding as follows.

\bigskip

\begin{proof}
[Proof of Theorem \ref{Thm main}]If $u:\mathbb{R}^{N}\rightarrow\mathbb{R}$ is
a $C_{0}^{\infty}$ function, we define
\[
U\left(  x,t\right)  =u\left(  x\right)  \psi\left(  t\right)  ,
\]
where
\[
\psi\in C_{0}^{\infty}\left(  \mathbb{R}\right)
\]
is a cutoff function with sprt$\,\psi\subset\left[  -T,T\right]  $,
\ $\int_{-T}^{T}\psi\left(  t\right)  dt>0$. Then (\ref{bound strip}) applied
to $U$ gives (\ref{stima L_0 a}) for $u$. Moreover, inequality
\eqref{stima L_0 b} immediately follows by difference.
\end{proof}

The crucial
step toward the proof of Theorem \ref{Thm main thm strip} is a
local estimate contained in the following:

\begin{proposition}
\label{Prop local bound} There exist constants $c,r_{0}$ such that for every
$z_{0}\in S_{T}$, $r\leq r_{0}$, $u\in C_{0}^{\infty}\left(  B_{r}\left(
z_{0}\right)  \right)  $, we have%
\begin{equation}
\sum_{i,j=1}^{p_{0}}\left\Vert \partial_{x_{i}x_{j}}^{2}u\right\Vert
_{L^{p}\left(  B_{r}\left(  z_{0}\right)  \right)  }\leq c\left\Vert
Lu\right\Vert _{L^{p}\left(  B_{r}\left(  z_{0}\right)  \right)  }\text{.}
\label{local bound}%
\end{equation}

\end{proposition}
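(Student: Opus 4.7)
The natural strategy is a \emph{freeze-and-absorb} argument built around the constant-coefficient operator $L_{z_0}$. First I would apply the global $L^p$ estimate for $L_{z_0}$, which is the main result of \cite{BCLP}: since $u \in C_0^\infty(B_r(z_0))$, extending by zero and integrating over $\mathbb{R}^{N+1}$ gives
\[
\sum_{i,j=1}^{p_0}\|\partial^2_{x_ix_j}u\|_{L^p(\mathbb{R}^{N+1})} \leq C_0\,\|L_{z_0}u\|_{L^p(\mathbb{R}^{N+1})},
\]
where the decisive requirement is that the constant $C_0$ depends only on $p,N,p_0,B,\Lambda$ and \emph{not} on the point $z_0$. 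Securing this $z_0$-uniform constant is exactly the "careful inspection of the quantitative dependence of some bounds proved in \cite{LP} and \cite{DP}" advertised in the introduction: one must track how the singular integral $L^p$ estimates associated with the second-order kernel $\partial^2_{x_ix_j}\Gamma(z_0;\cdot,\cdot)$ depend on the covariance matrix $C(z_0;t)$, and conclude that they are uniform over the admissible family $\{A(z_0):\Lambda^{-1}I\leq A_0(z_0)\leq \Lambda I\}$.

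Next I would re-introduce the original operator via
\[
L_{z_0}u \;=\; Lu \;+\; \sum_{k,l=1}^{p_0}\bigl(a_{kl}(z_0)-a_{kl}(z)\bigr)\,\partial^2_{x_kx_l}u,
\]
and, using that $u$ vanishes outside $B_r(z_0)$, estimate
\[
\sum_{i,j=1}^{p_0}\|\partial^2_{x_ix_j}u\|_{L^p(B_r(z_0))} \leq C_0\,\|Lu\|_{L^p(B_r(z_0))} + C_0\,\eta(r)\,\sum_{k,l=1}^{p_0}\|\partial^2_{x_kx_l}u\|_{L^p(B_r(z_0))},
\]
where $\eta(r):=\sup_{z_0\in S_T}\sup_{z\in B_r(z_0)}\max_{k,l\leq p_0}|a_{kl}(z)-a_{kl}(z_0)|$. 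Once one knows that $\eta(r)\to 0$ as $r\to 0^+$, choosing $r_0$ so small that $p_0^2 C_0\,\eta(r_0)\leq 1/2$ lets me absorb the error term into the left-hand side, giving \eqref{local bound} with $c=2p_0^2 C_0$.

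I expect the main obstacle to concentrate on precisely these two uniformities. The first, the $z_0$-independence of $C_0$, is a quantitative exercise in linear algebra and kernel analysis: one has to chase how the Gaussian-type kernel $\gamma(z_0;\cdot)$ and its relevant second-order derivatives scale with the frozen matrix $A(z_0)$, using the hypoellipticity analysis of \cite{LP} and the singular integral framework of \cite{DP}. The second, the uniform vanishing of $\eta(r)$, is subtler because the quasi-ball $B_r(z_0)$ is defined via the group law $\circ$ coming from the full matrix $B$ and the homogeneous norm coming from $B_0$, and its Euclidean size is genuinely $z_0$-dependent through $E(t-t_0)=\exp(-(t-t_0)B^T)$. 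The decisive input here is the geometric description of the quasi-balls in Lemma~2.1 of \cite{DP}, which, combined with the uniform continuity assumption \eqref{ww} and the restriction $z_0\in S_T$, is what must be leveraged to make $\eta(r)$ vanish uniformly as $r\to 0^+$. Once both uniformities are in hand, the absorption step is routine and \eqref{local bound} follows.
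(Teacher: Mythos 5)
Your strategy is essentially the paper's: freeze the coefficients at $z_{0}$, use a $z_{0}$-uniform $L^{p}$ estimate for the constant-coefficient operator $L_{z_{0}}$, and absorb the commutator $\sum_{h,k}(a_{hk}(z_{0})-a_{hk}(\cdot))\partial_{x_{h}x_{k}}^{2}u$ via uniform continuity by taking $r_{0}$ small. The paper carries out the two steps simultaneously rather than sequentially: it starts from the representation formula \eqref{repr formula}, in which the kernel $\partial_{x_{i}x_{j}}^{2}\gamma(z_{0};\cdot)$ is split by a cutoff into a compactly supported singular part $k_{0}$ and a tail $k_{\infty}$, substitutes $L_{z_{0}}u=Lu+(L_{z_{0}}-L)u$ inside each term, and estimates the six resulting pieces; the $z_{0}$-uniformity you correctly identify as the crux is exactly Theorem \ref{th5.?} together with the Appendix.

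One genuine imprecision: the inequality you quote as ``the main result of \cite{BCLP}'', namely $\sum_{i,j}\Vert\partial_{x_{i}x_{j}}^{2}u\Vert_{L^{p}(\mathbb{R}^{N+1})}\leq C_{0}\Vert L_{z_{0}}u\Vert_{L^{p}(\mathbb{R}^{N+1})}$, is not available in that form. The global result of \cite{BCLP} is a strip estimate on $S_{T}$ with $T$ small and carries the extra term $\Vert u\Vert_{L^{p}}$ on the right; a genuinely global clean estimate on all of $\mathbb{R}^{N+1}$ is not at hand here, because the kernel bounds and the quasi-distance $d$ are only local (the tail $k_{\infty}$ is shown to be integrable only on $S_{2T}$, and $d$ satisfies the quasi-triangle inequality only where it is bounded). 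What your argument actually needs, and what is true, is the \emph{local} constant-coefficient estimate on $B_{r}(z_{0})$ for $u\in C_{0}^{\infty}(B_{r}(z_{0}))$ without the zero-order term --- i.e.\ the constant-coefficient case of the Proposition itself --- and this is exactly what the representation formula plus Theorem \ref{th5.?} delivers. With that substitution your proof closes; your concern about the uniform vanishing of $\eta(r)$ is resolved simply because $E(\tau)=\exp(-\tau B^{T})$ is bounded for $|\tau|\leq 2T$, so the Euclidean diameter of $B_{r}(z_{0})$ tends to $0$ with $r$ uniformly for $z_{0}\in S_{T}$, whence $\eta(r)\leq\omega(cr)\to 0$.
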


\begin{proof}
Let $z_{0}\in S_{T}$ and $\rho_{0}\in(0,T]$ be fixed and choose a cutoff
function \ $\eta\in C_{0}^{\infty}\left(  \mathbb{R}^{N+1}\right)  $ such
that
\begin{align*}
\eta\left(  z\right)   &  =1\text{ for }\left\Vert z\right\Vert \leq\rho
_{0}/2;\\
\eta\left(  z\right)   &  =0\text{ for }\left\Vert z\right\Vert \geq\rho_{0}.
\end{align*}
Then, by \cite[Proposition 2.11]{DP} and (25) in \cite{BCLP}, we have, for
every $u\in C_{0}^{\infty}\left(  B_{r}\left(  z_{0}\right)  \right)  ,$
\begin{align}
\partial_{x_{i}x_{j}}^{2}u=  &  -PV\left(  L_{z_{0}}u\ast\left(  \eta
\partial_{x_{i}x_{j}}^{2}\gamma\left(  z_{0};\cdot\right)  \right)  \right)
-L_{z_{0}}u\ast\left(  \left(  1-\eta\right)  \partial_{x_{i}x_{j}}^{2}%
\gamma\left(  z_{0};\cdot\right)  \right) \nonumber\\
&  +c_{ij}\left(  z_{0}\right)  L_{z_{0}}u\nonumber\\
\equiv &  -PV\left(  L_{z_{0}}u\ast k_{0}\left(  z_{0};\cdot\right)  \right)
-L_{z_{0}}u\ast k_{\infty}\left(  z_{0};\cdot\right)  +c_{ij}\left(
z_{0}\right)  L_{z_{0}}u \label{repr formula}%
\end{align}
having set:%
\begin{equation}
\begin{aligned} k_{0}\left( z_{0};\cdot\right) & =\eta\partial_{x_{i}x_{j}}^{2}\gamma\left( z_{0};\cdot\right)\\ k_{\infty}\left( z_{0};\cdot\right) & =\left( 1-\eta\right) \partial_{x_{i}x_{j}}^{2}\gamma\left( z_{0};\cdot\right) \end{aligned} \label{k}%
\end{equation}
and
\[
c_{ij}(z_{0})=-\int_{\Vert\zeta\Vert=1}\partial_{x_{i}}\gamma_{0}(z_{0}%
;\zeta)\nu_{j}(\zeta)\,d\sigma(\zeta),
\]
where $\nu_{j}$ denotes the $j$-th component of the exterior normal $\nu$ to
the boundary of $\{\Vert\zeta\Vert<1\}$. In \eqref{repr formula} $\ast$
denotes the convolution with respect to the composition law $\circ$.

Writing%
\begin{align*}
L_{z_{0}}u\left(  z\right)   &  =\left(  L_{z_{0}}-L\right)  u\left(
z\right)  +Lu\left(  z\right) \\
&  =\sum_{i,j=1}^{p_{0}}\left(  a_{ij}\left(  z_{0}\right)  -a_{ij}\left(
z\right)  \right)  \partial_{x_{i}x_{j}}^{2}u\left(  z\right)  +Lu\left(
z\right)
\end{align*}
we get, by \eqref{repr formula},
\begin{align}
\partial_{x_{i}x_{j}}^{2}u=  &  -PV\left(  Lu\ast k_{0}\left(  z_{0}%
;\cdot\right)  \right)  -PV\left(  \sum_{h,k=1}^{p_{0}}\left(  a_{hk}\left(
z_{0}\right)  -a_{hk}\left(  \cdot\right)  \right)  \partial_{x_{h}x_{k}}%
^{2}u\ast k_{0}\left(  z_{0};\cdot\right)  \right) \nonumber\\
&  -Lu\ast k_{\infty}\left(  z_{0};\cdot\right)  -\sum_{h,k=1}^{p_{0}}\left(
a_{hk}\left(  z_{0}\right)  -a_{hk}\left(  \cdot\right)  \right)
\partial_{x_{h}x_{k}}^{2}u\ast k_{\infty}\left(  z_{0};\cdot\right)
\nonumber\\
&  +c_{ij}\left(  z_{0}\right)  Lu+c_{ij}\left(  z_{0}\right)  \sum
_{h,k=1}^{p_{0}}\left(  a_{hk}\left(  z_{0}\right)  -a_{hk}\left(
\cdot\right)  \right)  \partial_{x_{h}x_{k}}^{2}u\nonumber\\
=  &  I_{1}+I_{2}+J_{1}+J_{2}+A_{1}+A_{2}. \label{repr formula var}%
\end{align}

We now split the remaining part of the proof into several steps.

\medbreak
\textbf{Step 1.} \emph{$L^{p}$-estimate of $A_{1}$ and $A_{2}$.}

We obviously have
\[
\Vert A_{1}\Vert_{L^{p}(B_{r}(z_{0}))}\leq|c_{ij}(z_{0})|\Vert Lu\Vert
_{L^{p}(B_{r}(z_{0}))}.
\]
On the other hand, by Theorem \ref{DP-Prop2.7} and Remark \ref{remDP-Prop2.7}
in Appendix, there exists an absolute constant $c$ such that
\[
|c_{ij}(z_{0})|\leq\int_{\Vert\zeta\Vert=1}\left\vert \partial_{x_{i}}%
\gamma_{0}(z_{0};\zeta)\right\vert \,d\sigma(\zeta)\leq c\int_{\Vert\zeta
\Vert=1}\frac{1}{\Vert\zeta\Vert^{Q+1}}\,d\sigma(\zeta).
\]
Therefore
\begin{equation}
\Vert A_{1}\Vert_{L^{p}(B_{r}(z_{0}))}\leq c\Vert Lu\Vert_{L^{p}(B_{r}%
(z_{0}))}. \label{stima1}%
\end{equation}

Analogously, using the uniform continuity of the coefficients $a_{ij}$'s, we
get
\begin{equation}
\|A_{2}\|_{L^{p}(B_{r}(z_{0}))}\le c\,\omega(r)\sum_{h,k=1}^{p_{0}}
\|\partial^{2}_{x_{h}x_{k}}u \|_{L^{p}(B_{r}(z_{0}))}. \label{stima2}%
\end{equation}

\medbreak
\textbf{Step 2.} \emph{$L^{p}$-estimate of $J_{1}$ and $J_{2}$.}

Without loss of generality we can assume $B_{r}(z_{0})\subseteq S_{2T}$ for
every $z_{0}\in S_{T}$. Then
\[
\Vert J_{1}\Vert_{L^{p}(B_{r}(z_{0}))}\leq c\int_{S_{2T}}|k_{\infty}%
(z_{0};\zeta)|\,d\zeta\,\Vert Lu\Vert_{L^{p}(B_{r}(z_{0}))},
\]
where the presence of the constant $c$ depends on the fact that our group is
not unimodular. On the other hand, just proceeding as in \cite{BCLP}, pages
799-800, and using the estimates in Appendix (see Proposition \ref{propapp}) we get
\[
\int_{S_{2T}}|k_{\infty}(z_{0};\zeta)|\,d\zeta\leq c,
\]
where $c$ is independent of $z_{0}\in S_{T}$. Therefore
\begin{equation}
\Vert J_{1}\Vert_{L^{p}(B_{r}(z_{0}))}\leq c\Vert Lu\Vert_{L^{p}(B_{r}%
(z_{0}))}.\label{stima3}%
\end{equation}
Analogously, using the uniform continuity of the $a_{ij}$'s, we get
\begin{equation}
\Vert J_{2}\Vert_{L^{p}(B_{r}(z_{0}))}\leq c\omega(r)\sum_{h,k=1}^{p_{0}}%
\Vert\partial_{x_{h}x_{k}}^{2}u\Vert_{L^{p}(B_{r}(z_{0}))}.\label{stima4}%
\end{equation}

\medbreak
\textbf{Step 3.} \emph{$L^{p}$-estimate of $I_{1}$ and $I_{2}$.}

To estimate the $L^{p}$-norm of $I_{1}$ and $I_{2}$, we can use Theorem
\ref{th5.?}, getting:
\begin{align}
&  \Vert I_{1}\Vert_{L^{p}(B_{r}(z_{0}))}+\Vert I_{2}\Vert_{L^{p}(B_{r}%
(z_{0}))}\nonumber\\
&  \leq c\left\{  \Vert Lu\Vert_{L^{p}(B_{r}(z_{0}))}+\left\Vert \sum
_{h,k=1}^{p_{0}}\left[  a_{hk}\left(  z_{0}\right)  -a_{hk}\left(
\cdot\right)  \right]  \partial_{x_{h}x_{k}}^{2}u\right\Vert _{L^{p}\left(
B_{r}\left(  z_{0}\right)  \right)  }\right\}  \nonumber\\
&  \leq c\left\{  \left\Vert Lu\right\Vert _{L^{p}\left(  B_{r}\left(
z_{0}\right)  \right)  }+\omega\left(  r\right)  \sum_{h,k=1}^{p_{0}%
}\left\Vert \partial_{x_{h}x_{k}}^{2}u\right\Vert _{L^{p}\left(  B_{r}\left(
z_{0}\right)  \right)  }\right\}  \label{stima5}%
\end{align}
with $c$ independent of $r$ and $z_{0}$.

\medbreak
\textbf{Step 4.} \emph{Conclusion.}

By \eqref{repr formula var} and the estimates \eqref{stima1}-\eqref{stima5} in
the previous steps, we get
\[
\sum_{i,j=1}^{p_{0}}\left\Vert \partial_{x_{i}x_{j}}^{2}u\right\Vert
_{L^{p}\left(  B_{r}\left(  z_{0}\right)  \right)  }\leq c\left\{  \left\Vert
Lu\right\Vert _{L^{p}\left(  B_{r}\left(  z_{0}\right)  \right)  }%
+\omega\left(  r\right)  \sum_{h,k=1}^{p_{0}}\left\Vert \partial_{x_{h}x_{k}%
}^{2}u\right\Vert _{L^{p}\left(  B_{r}\left(  z_{0}\right)  \right)
}\right\}
\]
with $c$ independent of $r$ and $z_{0}$.

\noindent We now fix once and for all $r_{0}$ small enough so that
$c\omega\left(  r_{0}\right)  <1$, getting%
\[
\sum_{i,j=1}^{p_{0}}\left\Vert \partial_{x_{i}x_{j}}^{2}u\right\Vert
_{L^{p}\left(  B_{r}\left(  z_{0}\right)  \right)  }\leq c\left\Vert
Lu\right\Vert _{L^{p}\left(  B_{r}\left(  z_{0}\right)  \right)  }%
\]
for every $u\in C_{0}^{\infty}\left(  B_{r}(z_{0}) \right)  $ with $r\leq
r_{0}$, with $c,r_{0}$ independent of $u$ and $z_{0}\in S_{T}$.
\end{proof}

Next, we have to prove the following crucial
ingredient which has been used in
the previous proof:

\bigskip

\begin{theorem}
\label{th5.?}Let $k_{0}(z_{0};\cdot)$ be the singular kernel defined
in \eqref{k}. For every $p\in(1,\infty)$ there exists a positive
constant $c$, independent of $z_{0}$, such that
\begin{equation}
\Vert PV(f\ast k_{0}(z_0; \cdot)) \Vert_{L^{p}(B_{r}(z_{0}))}\leq
c\Vert f\Vert _{L^{p}(B_{r}(z_{0}))}\label{a}
\end{equation}
for every $f\in C_{0}^{\infty}(B_{r}(z_{0}))$, $z_{0}\in S_{T}$ and $r>0$ such
that $B_{r}(z_{0})\subseteq S_{2T}$.
\end{theorem}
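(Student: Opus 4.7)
The plan is to exploit that the convolution $f \ast k_0(z_0;\cdot)$ is translation invariant with respect to the group law $\circ$, which depends only on $B$ and \emph{not} on $z_0$. Hence, for each fixed $z_0 \in S_T$, the operator $T_{z_0} f := PV(f \ast k_0(z_0;\cdot))$ is a translation-invariant singular integral on the homogeneous group $(\mathbb{R}^{N+1}, \circ, \delta(\lambda))$. All $z_0$-dependence is absorbed into the kernel, so it suffices to verify Calder\'on-Zygmund kernel conditions with constants uniform in $z_0$ and invoke standard CZ theory on the locally doubling quasimetric space $(\mathbb{R}^{N+1}, d, dz)$, working inside the bounded region $S_{2T}$ where the quasisymmetric and quasitriangle inequalities for $d$ actually hold.

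First I would establish the size and H\"ormander-type regularity bounds
\[
|k_0(z_0;z)| \leq \frac{c}{\Vert z \Vert^{Q+2}}, \qquad |Y k_0(z_0;z)| \leq \frac{c}{\Vert z \Vert^{Q+3}}
\]
for $z$ in the support of $\eta$ and $Y$ any weight-one left-invariant vector field, with $c$ independent of $z_0$. These should follow from the quantitative Gaussian bounds of \cite{LP} and \cite{DP}, as packaged in Theorem \ref{DP-Prop2.7} and Remark \ref{remDP-Prop2.7} of the Appendix; the cutoff $\eta$ localizes the kernel without affecting the pointwise estimates away from the origin.

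Second, I would establish a uniform cancellation property of the form $\left|\int_{r_1 < \Vert z \Vert < r_2} k_0(z_0;z)\,dz\right| \leq c$ for $0 < r_1 < r_2 \leq \rho_0$. The natural route is to split $\gamma = \gamma_0 + (\gamma - \gamma_0)$, where $\gamma_0(z_0;\cdot)$ is the fundamental solution of the $\delta(\lambda)$-homogeneous principal part $K_{z_0}$: the main term $\partial^2_{x_i x_j} \gamma_0(z_0;\cdot)$ is $\delta(\lambda)$-homogeneous of the critical degree $-(Q+2)$ and its integral on spherical shells vanishes by the same divergence-theorem computation used to define $c_{ij}(z_0)$, while the remainder has a strictly weaker singularity, again by the estimates from \cite{DP}.

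The expected main obstacle is the $L^2$-bound for $T_{z_0}$ with a $z_0$-uniform constant. One option is to combine the kernel conditions above with a $T(1)$-type theorem for locally-homogeneous spaces. A cleaner route is to notice that in the constant-coefficient setting $a_{ij} \equiv a_{ij}(z_0)$ the $L^p$-boundedness of this specific operator was already established in \cite{BCLP}; a careful inspection of the proof there, together with the uniform Gaussian bounds from the Appendix, shows that the resulting constant depends only on $p, N, p_0, \Lambda$ and $B$, and not on the particular matrix $A_0(z_0)$ (which enters only through its uniform ellipticity bounds). This yields \eqref{a} with a constant independent of $z_0 \in S_T$.
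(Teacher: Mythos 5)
Your overall architecture (uniform size and regularity bounds, uniform cancellation on shells, then an abstract Calder\'on--Zygmund/$T(1)$ input for the $L^{2}$ bound) matches the paper's, and your first two steps are essentially the ones carried out there via Theorem \ref{Thm main appendix}, Remark \ref{remDP-Prop2.7} and the $\gamma=\gamma_{0}+(\gamma-\gamma_{0})$ comparison using (2.45) of \cite{DP}. But the step you yourself flag as ``the expected main obstacle'' is left unproven, and both escape routes you offer are inadequate. Option (b) is circular: the paper states explicitly that the whole novelty of this theorem over Theorem 22 of \cite{BCLP} is the uniformity of the constant in $z_{0}$, and ``a careful inspection shows the constant depends only on $\Lambda$ and $B$'' is precisely the assertion to be proved; when that inspection is actually performed it amounts to re-verifying, with the uniform Gaussian bounds of the Appendix, the hypotheses of the abstract result \cite[Thm 3]{B} --- which is what the paper does. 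Moreover, your framing of $T_{z_{0}}$ as ``a translation-invariant singular integral on the homogeneous group'' is not quite right: the dilations $\delta(\lambda)$ and the norm $\Vert\cdot\Vert$ are adapted to $B_{0}$, while the group law $\circ$ is adapted to $B$, so $d$ is not a quasidistance on a homogeneous group and the space is not globally doubling; the classical Knapp--Stein/Coifman--Weiss route is not directly available, which is why the nondoubling framework of \cite{B} is invoked.

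Option (a) points at the right tool but omits its decisive hypothesis. The theorem of \cite{B} requires, beyond (\ref{standard 1})--(\ref{boundedness}), that the function $h(z_{0},\cdot)-h^{\ast}(z_{0},\cdot)$ of (\ref{T1})--(\ref{T*1}) (a $T(1)-T^{\ast}(1)$--type quantity) be H\"older continuous, uniformly in $z_{0}$. This is not a formal consequence of the shell-cancellation bound: the paper proves it by decomposing $h=h_{1}+h_{2}+h_{3}$, identifying $h_{2},h_{3}$ as smooth functions times $z_{0}$-uniformly bounded constants, and establishing a Lipschitz bound on $h_{1}$ by differentiating under the integral with right-invariant vector fields and using $\int\vert k_{0}(z_{0};w)\vert\,\vert w\vert\,dw<\infty$. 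The paper even notes in a footnote that this verification was not explicitly done in \cite{BCLP}, so it cannot be inherited from there. Without this check (or an equivalent uniform $L^{2}$ argument), your proof of (\ref{a}) is incomplete.
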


\begin{proof}
This theorem is analogous to Theorem 22 in \cite{BCLP}, the novelty being the
uniformity of the bound with respect to the point $z_{0}$ in the kernel
$k_{0}(z_{0};\cdot)$. As in \cite{BCLP}, this theorem follows applying the
abstract result contained in \cite[Thm 3]{B}. Without recalling the general
setting of nondoubling spaces considered in \cite{B}, here we just list, for
convenience of the reader, the assumptions that need to be checked on our
kernel, in order to derive Theorem \ref{th5.?} from \cite[Thm 3]{B}. The
constant $c$ in (\ref{a}) will depend only on the constants involved in the
following bounds.

Let%
\[
k(z_{0};w^{-1}\circ z)=a\left(  z\right)  k_{0}(z_{0};w^{-1}\circ z)b\left(
w\right)
\]
where $a,b\in C_{0}^{\infty}\left(  \mathbb{R}^{N+1}\right)  $ with $\operatorname{sprt}a$,
$\operatorname{sprt}b\subset B_{r}\left(  z_{0}\right)$. Then the required properties are
the following:%
\begin{equation}
\left\vert k(z_{0};w^{-1}\circ z)\right\vert +\left\vert k(z_{0};z^{-1}\circ
w)\right\vert \leq\frac{c}{\Vert w^{-1}\circ z\Vert^{Q+2}}\label{standard 1}%
\end{equation}
for every $z_{0}\in S_{T},$ $z,w\in S_{2T}$ such that $\left\Vert w^{-1}\circ
z\right\Vert \leq1;$%
\begin{align}
& \left\vert k(z_{0};w^{-1}\circ z)-k(z_{0};w^{-1}\circ\overline
{z})\right\vert +\label{standard 2b}\\
& \left\vert k(z_{0};z^{-1}\circ w)-k(z_{0};\overline{z}^{-1}\circ
w)\right\vert \leq c\frac{\Vert z^{-1}\circ\overline{z}\Vert}{\Vert
w^{-1}\circ z\Vert^{Q+3}}\nonumber
\end{align}
for every $z_{0}\in S_{T},z,\overline{z},w\in S_{2T}$ such that $\Vert
z^{-1}\circ\overline{z}\Vert\leq M\Vert w^{-1}\circ z\Vert$ and $\left\Vert
w^{-1}\circ z\right\Vert \leq1;$%
\begin{equation}
\left\vert \int_{r_{1}\leq\Vert\zeta^{-1}\circ z\Vert\leq r_{2}}k(z_{0}%
;\zeta^{-1}\circ z)\,d\zeta\right\vert +\left\vert \int_{r_{1}\leq\Vert
\zeta^{-1}\circ z\Vert\leq r_{2}}k(z_{0};z^{-1}\circ\zeta)\,d\zeta\right\vert
\leq c\label{boundedness}%
\end{equation}
for every $r_{1},r_{2}$ with $0<r_{1}<r_{2}$ and for all $z\in S_{2T}$ and
$z_{0}\in S_{T};$%
\begin{equation}
h\left(  z_{0},\cdot\right)  -h^{\ast}\left(  z_{0},\cdot\right)  \in
C^{\gamma}\left(  B_{r}(z_{0})\right)  \label{holder}%
\end{equation}
for some positive $\gamma$, where%
\begin{align}
h\left(  z_{0},z\right)    & =\lim_{r\rightarrow0}\int_{r\leq\Vert\zeta
^{-1}\circ z\Vert}k(z_{0};\zeta^{-1}\circ z)\,d\zeta;\label{T1}\\
h^{\ast}\left(  z_{0},z\right)    & =\lim_{r\rightarrow0}\int_{r\leq\Vert
\zeta^{-1}\circ z\Vert}k(z_{0};z^{-1}\circ\zeta)\,d\zeta.\label{T*1}%
\end{align}

Now: estimates (\ref{standard 1}) and (\ref{standard 2b}) follow from Theorem
\ref{Thm main appendix} and Remark \ref{remDP-Prop2.7} contained in the
Appendix.

Let us prove (\ref{boundedness}). Actually, we will bound the first integral,
the bound on the second being analogous. Moreover, we actually prove the
following%
\begin{equation}
\left\vert \int_{r_{1}\leq\Vert\zeta^{-1}\circ z\Vert\leq r_{2}}k_{0}%
(z_{0};\zeta^{-1}\circ z)\,d\zeta\right\vert \leq c,\label{boundedness 2}%
\end{equation}
which implies the analogous bound on $k$ by the same argument contained in
\cite[Prop. 18]{BCLP}. To show (\ref{boundedness 2}), we proceed as in
\cite{BCLP}, page 803. Without loss of generality we assume $r_{2}\leq\rho
_{0}$, where $\rho_{0}$ is the positive constant introduced at the beginning
of the proof of Proposition \ref{Prop local bound}; in fact, $k_{0}%
(z_{0};w)=0$ for $\Vert w\Vert>\rho_{0}$.

We have:
\[
\int_{r_{1}\leq\Vert\zeta^{-1}\circ z\Vert\leq r_{2}}k_{0}(z_{0};\zeta
^{-1}\circ z)\,d\zeta=A(z_{0};r_{1},r_{2})+B(z_{0};r_{1},r_{2}),
\]
where
\[
A(z_{0};r_{1},r_{2})=\int_{r_{1}\leq\Vert w\Vert\leq r_{2}}\eta(w)\partial
_{x_{i}x_{j}}^{2}\gamma(z_{0};w)\,dw
\]
and
\[
B(z_{0};r_{1},r_{2})=\int_{r_{1}\leq\Vert w\Vert\leq r_{2}}\eta(w)\partial
_{x_{i}x_{j}}^{2}\gamma(z_{0};w)\left(  e^{\tau Tr(B)}-1\right)  \,dw,\qquad
w=(\xi,\tau).
\]
Then, by \eqref{DP2.37}
\[
B(z_{0};r_{1},r_{2})\leq c\,\int_{r_{1}\leq\Vert w\Vert\leq r_{2}}\frac
{1}{\Vert w\Vert^{Q}}\,dw\leq c\int_{\Vert w\Vert\leq\rho_{0}}\frac{1}{\Vert
w\Vert^{Q}}\,dw
\]
with $c$ independent of $z_{0}\in S_{T}$. Moreover, if $r_{2}\leq\frac
{\rho_{0}}{2}$, then integrating by parts
\begin{align*}
A(z_{0};r_{1},r_{2}) &  =\int_{\Vert w\Vert=r_{2}}\partial_{x_{i}}\gamma
(z_{0};w)\nu_{j}\,d\sigma(w)-\int_{\Vert w\Vert=r_{1}}\partial_{x_{i}}%
\gamma(z_{0};w)\nu_{j}\,d\sigma(w)\\
&  =:I(z_{0};r_{2})-I(z_{0};r_{1}).
\end{align*}
Now we estimate $I(z_{0};\rho)$ by proceeding as in \cite{DP}, page 1280. We
have
\begin{align*}
I(z_{0};\rho)= &  \int_{\Vert\zeta\Vert=1}\partial_{x_{i}}\gamma_{\rho}%
(z_{0};\zeta)\nu_{j}\,d\sigma(\zeta)\\
= &  \int_{\Vert\zeta\Vert=1}\left(  \partial_{x_{i}}\gamma_{\rho}(z_{0}%
;\zeta)-\partial_{x_{i}}\gamma_{0}(z_{0};\zeta)\right)  \nu_{j}\,d\sigma
(\zeta)\\
&  +\int_{\Vert\zeta\Vert=1}\partial_{x_{i}}\gamma_{0}(z_{0};\zeta)\nu
_{j}\,d\sigma(\zeta)
\end{align*}
where $\gamma_{\rho}(z_{0};\cdot)$ is defined as in \cite{DP}, (2.24).

The last integrand can be estimated by a constant independent of $z_{0}\in
S_{T}$, thanks to \eqref{DP2.36} and Remark \ref{remDP-Prop2.7}. On the other hand, from (2.45) in \cite{DP}
we get, for a suitable $c$ independent of $z_{0}\in S_{T}$:
\[
\left\vert \int_{\Vert\zeta\Vert=1}\left(  \partial_{x_{i}}\gamma_{\rho}%
(z_{0};\zeta)-\partial_{x_{i}}\gamma_{0}(z_{0};\zeta)\right)  \nu_{j}%
\,d\sigma(\zeta)\right\vert \leq c\,\rho\int_{\Vert\zeta\Vert=1}\frac{1}%
{\sqrt{\tau}}\gamma(\zeta)\,d\sigma(\zeta),
\]
$\zeta=(x,\tau)$, where $\gamma$ is the fundamental solution with pole at the
origin of
\[
\mu\sum_{i=1}^{p_{0}}\partial_{x_{i}}^{2}+\langle x,B_{0}\nabla\rangle
-\partial_{t}%
\]
for a suitable $\mu>0$ independent of $z_{0}\in S_{T}$. Note that the last
integral is an absolute constant.

Suppose now $\frac{\rho_{0}}{2}\leq r_{2}\leq\rho_{0}$. Then we can write
\[
A\left(  z_{0};r_{1},r_{2}\right)  \leq\left\vert \int_{r_{1}<\left\Vert
w\right\Vert <\rho_{0}/2}k_{0}\left(  z_{0};w\right)  dw\right\vert
+\left\vert \int_{\rho_{0}/2<\left\Vert w\right\Vert <r_{2}}k_{0}\left(
z_{0};w\right)  dw\right\vert .
\]
The first term can be bounded as above, while the second one is bounded by%
\[
\int_{\rho_{0}/2\leq\left\Vert w\right\Vert \leq\rho_{0}}c\left\Vert
w\right\Vert ^{-\left(  2+Q\right)  }\,dw
\]
with $c$ independent of $z_{0}$, see \eqref{DP2.37}. This completes the proof
of (\ref{boundedness 2}).

Finally, let us prove the H\"{o}lder continuity of the function
\[
h\left(  z_{0},\cdot\right)  -h^{\ast}\left(  z_{0},\cdot\right)
\]
defined in (\ref{T1})-(\ref{T*1}).\footnote{We take this opportunity to notice that
in \cite{BCLP} this check has not been explicitly done.}
\begin{align*}
h\left(  z_{0},z\right)    & =\lim_{r\rightarrow0}a\left(  z\right)
\int_{r\leq\Vert\zeta^{-1}\circ z\Vert}k_{0}(z_{0};\zeta^{-1}\circ z)b\left(
\zeta\right)  \,d\zeta=\\
& =\lim_{r\rightarrow0}a\left(  z\right)  \int_{r\leq\Vert
w\Vert}k_{0}
(z_{0};w)b\left(  z\circ w^{-1}\right)  \,e^{\tau Tr(B)}dw\\
& =a\left(  z\right)
 \int_{\Vert w\Vert \le \rho_0}
k_{0}(z_{0};w)\left[ b\left(  z\circ
w^{-1}\right)  \,-b\left(  z\right)  \right]  e^{\tau Tr(B)}dw+\\
& +a\left(  z\right)  b\left(  z\right)
\int_{\Vert w\Vert \le \rho_0}
 k_{0}(z_{0};w)\left[  e^{\tau
Tr(B)}-1\right]  dw\\
& +\lim_{r\rightarrow0}a\left(  z\right)  b\left(  z\right)  \int_{r\leq\Vert
w\Vert}k_{0}(z_{0};w)dw\\
& =h_{1}\left(  z_{0},z\right)  +h_{2}\left(  z_{0},z\right)  +h_{3}\left(
z_{0},z\right)  .
\end{align*}
Now:%
\[
h_{3}\left(  z_{0},z\right)  =a\left(  z\right)  b\left(  z\right)  c\left(
z_{0}\right)
\]
with $a\left(  \cdot\right)$, $b\left(  \cdot\right)  $ smooth and $c\left(
z_{0}\right)  $ uniformly bounded in $z_{0}$ by the previous bound
(\ref{boundedness 2}). Also,%
\[
h_{2}\left(  z_{0},z\right)  =a\left(  z\right)  b\left(  z\right)
c_{1}\left(  z_{0}\right)
\]
with $c\left(  z_{0}\right)  $ uniformly bounded in $z_{0}$ by the same
argument used above to bound $B(z_{0};r_{1},r_{2})$. Let us come to
$h_{1}\left(  z_{0},z\right)  $. If $Z$ is any right-invariant differential
operator, then%
\begin{align*}
Zh_{1}\left(  z_{0},z\right)    & =Za\left(  z\right)
\int_{\Vert w\Vert \le \rho_0}
 k_{0}%
(z_{0};w)
\left[  b\left(  z\circ w^{-1}\right)  \,-b\left(  z\right)  \right]
e^{\tau Tr(B)}dw\\
& +a\left(  z\right)
\int_{\Vert w\Vert \le \rho_0}
 k_{0}(z_{0};w)\left[  Zb\left(  z\circ
w^{-1}\right)  \,-Zb\left(  z\right)  \right]  e^{\tau Tr(B)}dw,
\end{align*}
hence%
\[
\left\vert Zh_{1}\left(  z_{0},z\right) \right\vert \leq c
 \int_{\Vert w\Vert \le \rho_0}
 \left\vert k_{0}(z_{0};w)\right\vert \left\vert w\right\vert dw\leq
c.
\]
Since this procedure can be iterated, we get an upper bound on any derivative
of the kind $\left\vert Z_{1}Z_{2}...Z_{k}h_{1}\left(  z_{0},z\right)
\right\vert $, hence (since the commutators of suitable right invariant vector
fields span $\mathbb{R}^{N+1}$) also on $\left\vert \nabla h_{1}\left(
z_{0},z\right)  \right\vert $. Therefore the function $h_{1}\left(
z_{0},\cdot\right)  $ is Lipschitz continuous, uniformly with respect to
$z_{0}$. The function $h^{\ast}\left(  z_{0},\cdot\right)  $ can be handled
similarly. This completes the proof of the conditions which are sufficient to
apply \cite[Thm 3]{B} and deduce (\ref{a}), with a constant $c$ independent of
$z_{0}$.
\end{proof}

In order to deduce Theorem \ref{Thm main thm strip} from Proposition
\ref{Prop local bound}, we now need to recall a covering lemma, see Lemma 21
in \cite{BCLP} (note that this result is not standard since our space is not
globally doubling):

\begin{lemma}
\label{Lemma Covering}For every $r_{0}>0$ and $K>1$ there exist $r\in\left(
0,r_{0}\right)  $, a positive integer $M$ and a sequence of points $\left\{
z_{i}\right\}  _{i=1}^{\infty}\subset S_{T}$ such that:%
\begin{align}
S_{T}  &  \subset\bigcup\limits_{i=1}^{\infty}B_{r}\left(  z_{i}\right)
;\label{cov 1}\\
\sum_{i=1}^{\infty}\chi_{B_{Kr}\left(  z_{i}\right)  }\left(  z\right)   &
\leq M\text{ \ }\forall z\in S_{T}. \label{cov 2}%
\end{align}

\end{lemma}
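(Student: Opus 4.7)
The plan is to construct $\{z_i\}$ as a maximal $d$-separated subset of $S_T$, in the spirit of a Vitali-type argument, so that both conclusions follow from two local structural facts about $d$: the quasisymmetry and quasitriangle inequality on small scales (Lemma 2.1 of \cite{DP}, valid on $d$-balls of radius below some universal $\rho^\ast$), and a uniform local volume estimate that will play the role of local doubling.

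\textbf{Uniform volume estimate.} The first step I would carry out is to observe that, although $d$ depends on the non-commutative group law $\circ$, the Lebesgue measure of $d$-balls is center-independent at small scales. From $(x,t)\circ(\xi,\tau)=(\xi+E(\tau)x,t+\tau)$ a block-triangular computation shows that the left translation $w\mapsto z\circ w$ has Jacobian determinant $1$, while the inversion $(\xi,\tau)\mapsto(-E(-\tau)\xi,-\tau)$ has Jacobian of absolute value $e^{\tau\operatorname{Tr}(B)}$. Changing variables $\zeta=z\circ w^{-1}$ in the definition of $B_\rho(z)$ therefore yields
\[
|B_\rho(z)|=\int_{\|w\|<\rho}e^{\tau\operatorname{Tr}(B)}\,dw,
\]
independent of $z$, and the dilation substitution $w=\delta(\rho)\tilde w$ gives $|B_\rho(z)|\sim C_\ast\rho^{Q+2}$ as $\rho\to 0$. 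In particular there is a uniform local doubling constant $D$, depending only on $B$, such that $|B_{2\rho}(z)|\leq D|B_\rho(z)|$ for every $z\in S_T$ and every $\rho$ below a fixed threshold.

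\textbf{Construction and verification.} Let $c_d$ denote the local quasimetric constant for $d$. I would fix $r\in(0,r_0)$ small enough that $2Kc_d^2 r\leq\rho^\ast$, and then produce, by Zorn's lemma (or greedily along a countable dense subset of $S_T$), a countable family $\{z_i\}\subset S_T$ maximal with respect to the separation condition $d(z_i,z_j)\geq r/(2c_d)$ for $i\neq j$. Maximality forces every $z\in S_T$ to lie within $d$-distance $r/(2c_d)$ of some $z_i$, and quasisymmetry converts this to $z\in B_r(z_i)$, giving (\ref{cov 1}). Separation together with quasisymmetry also ensures that the shrunken balls $B_{r/(4c_d^2)}(z_i)$ are pairwise disjoint. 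To prove (\ref{cov 2}), I would fix $z\in S_T$, set $I(z)=\{i:z\in B_{Kr}(z_i)\}$, and use quasisymmetry and the quasitriangle inequality at scale $Kr$ to enclose all centers $z_i$ with $i\in I(z)$, and hence all their disjoint shrunken balls, inside a single $d$-ball of radius $R\sim Kr$ around some reference $z_{i_0}$. Summing volumes and invoking the uniform local doubling of the previous paragraph then gives
\[
|I(z)|\leq\frac{|B_R(z_{i_0})|}{|B_{r/(4c_d^2)}(z_{i_0})|}\leq M(K),
\]
with $M(K)$ independent of $r$ and $z$.

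\textbf{Main obstacle.} The delicate point is that $d$ is only a quasidistance at bounded scales and the associated balls are not globally doubling --- the volume formula above grows exponentially in $\rho^2$ when $\operatorname{Tr}(B)\neq 0$ --- so the classical Vitali/Besicovitch covering lemmas do not apply out of the box. The argument succeeds only because the construction is confined to scales below $\rho^\ast$, on which both the quasitriangle inequality and doubling hold with constants depending only on $B$, and because the center-independence of $|B_\rho(z)|$ at such scales makes uniformity in $z\in S_T$ automatic rather than something one has to fight for.
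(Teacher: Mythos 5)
The paper itself does not prove this lemma here; it refers to Lemma 21 of \cite{BCLP}, so there is no in-text proof to compare against. Your Vitali-type argument is nonetheless sound in structure and isolates the right key point: the non-doubling difficulty is circumvented because the Lebesgue measure of $B_\rho(z)$ is exactly center-independent. Your Jacobian computation is correct --- the left translation $w\mapsto z\circ w$ has unit Jacobian because the group law is block-triangular, and inversion has Jacobian of modulus $e^{\tau\operatorname{Tr}B}$ --- so $|B_\rho(z)|=\int_{\|w\|<\rho}e^{\tau\operatorname{Tr}B}\,dw$ for every $z$, and the substitution $w=\delta(\rho)\tilde w$ gives $|B_\rho(z)|\sim C_\ast\rho^{Q+2}$ with a doubling constant that is uniform at small scales. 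Combined with the local quasisymmetry and quasitriangle inequality for $d$, the maximal separated family yields both the covering and the bounded overlap as you describe.

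A few small adjustments of constants are needed. With separation threshold $r/(2c_d)$, the shrunken radius $r/(4c_d^2)$ is not quite small enough to guarantee disjointness when $c_d>1$: if $\zeta\in B_{\rho'}(z_i)\cap B_{\rho'}(z_j)$ then the quasitriangle and quasisymmetry estimates give $d(z_i,z_j)\le c_d(1+c_d)\rho'$, so one needs $\rho'< r/(2c_d^2(1+c_d))$; taking $\rho'=r/(4c_d^3)$ works since $1+c_d\le 2c_d$. Likewise, when you enclose the centers $\{z_i: i\in I(z)\}$ and their disjoint shrunken balls inside a single ball around $z_{i_0}$, the needed radius is on the order of $c_d^2(1+c_d)Kr$ rather than $2c_d^2 Kr$, so the initial smallness condition on $r$ should be tuned so that this larger radius stays below the quasimetric threshold $\rho^\ast$. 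Finally, if the family is produced by Zorn it could a priori be uncountable; countability follows because the disjoint shrunken balls all have the same fixed positive measure and sit inside the $\sigma$-finite set $S_{2T}$ (or one uses the greedy construction you mention). None of these affect the substance of the proof.
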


\begin{proof}
[Proof of Theorem \ref{Thm main thm strip}]Let us apply the previous lemma
with $r_{0}$ as in Proposition \ref{Prop local bound}; for a fixed
$r\in(0,r_{0})$, with $r/2$ satisfying (\ref{cov 1}), (\ref{cov 2}). Pick
$A\in C_{0}^{\infty}\left(  B_{r}\left(  0\right)  \right)  ,$ $A=1$ in
$B_{r/2}\left(  0\right)  ,$ $0\leq A\leq1$ and let $a_{k}\left(  z\right)
=A\left(  z_{k}^{-1}\circ z\right)  $.

Let now $u\in C_{0}^{\infty}\left(  S_{T}\right)  $. By (\ref{cov 1}) we can
write%
\begin{align}
\left\Vert \partial_{x_{i}x_{j}}^{2}u\right\Vert _{L^{p}\left(  S_{T}\right)
}^{p} &  \leq\sum_{k=1}^{\infty}\left\Vert \partial_{x_{i}x_{j}}%
^{2}u\right\Vert _{L^{p}\left(  B_{r/2}\left(  z_{k}\right)  \right)  }%
^{p}=\sum_{k=1}^{\infty}\left\Vert \partial_{x_{i}x_{j}}^{2}\left(
a_{k}u\right)  \right\Vert _{L^{p}\left(  B_{r/2}\left(  z_{k}\right)
\right)  }^{p}\nonumber\\
&  \leq\sum_{k=1}^{\infty}\left\Vert \partial_{x_{i}x_{j}}^{2}\left(
a_{k}u\right)  \right\Vert _{L^{p}\left(  B_{r}\left(  z_{k}\right)  \right)
}^{p}.\label{main 1}%
\end{align}
On the other hand, by (\ref{local bound}) we have%
\begin{align}
&  \left\Vert \partial_{x_{i}x_{j}}^{2}\left(  a_{k}u\right)  \right\Vert
_{L^{p}\left(  B_{r}\left(  z_{k}\right)  \right)  }\leq c\left\Vert L\left(
a_{k}u\right)  \right\Vert _{L^{p}\left(  B_{r}\left(  z_{k}\right)  \right)
}\nonumber\\
&  \leq c\left\{  \left\Vert a_{k}Lu\right\Vert _{L^{p}\left(  B_{r}\left(
z_{k}\right)  \right)  }+2\sum_{l,m=1}^{p_{0}}\left\Vert \partial_{x_{l}}%
a_{k}\partial_{x_{m}}u\right\Vert _{L^{p}\left(  B_{r}\left(  z_{k}\right)
\right)  }+\left\Vert uLa_{k}\right\Vert _{L^{p}\left(  B_{r}\left(
z_{k}\right)  \right)  }\right\}  .\label{main 2}%
\end{align}
By recalling that the operators $\partial_{x_{l}}$, $l=1,...,p_{0}$, and
$Y_{0}:=\sum_{i,j=1}^{N}b_{ij}x_{i}\partial_{x_{j}}$ are left invariant with
respect to the group law $\circ$, we have
\begin{align*}
\sup_{z\in B_{r}\left(  z_{k}\right)  }\left\vert \partial_{x_{l}}a_{k}\left(
z\right)  \right\vert  &  =\sup_{z\in B_{r}\left(  0\right)  }\left\vert
\partial_{x_{l}}A\left(  z\right)  \right\vert \leq c,\qquad l=1,...,p_{0},\\
\sup_{z\in B_{r}\left(  z_{k}\right)  }\left\vert Y_{0}a_{k}\left(  z\right)
\right\vert  &  =\sup_{z\in B_{r}\left(  0\right)  }\left\vert Y_{0}%
A(z)\right\vert \leq c
\end{align*}
and
\[
\sup_{z\in B_{r}\left(  z_{k}\right)  }\left\vert \partial_{x_{i}x_{j}}%
^{2}a_{k}\left(  z\right)  \right\vert =\sup_{z\in B_{r}\left(  0\right)
}\left\vert \partial_{x_{i}x_{j}}^{2}A\left(  z\right)  \right\vert \leq
c,\qquad i,j=1,2,...,p_{0}.
\]
As a consequence
\[
\sup_{z\in B_{r}\left(  z_{k}\right)  }\left\vert La_{k}\left(  z\right)
\right\vert \leq c
\]
with $c$ independent of $k$. Hence (\ref{main 2}) gives%
\[
\left\Vert \partial_{x_{i}x_{j}}^{2}\left(  a_{k}u\right)  \right\Vert
_{L^{p}\left(  B_{r}\left(  z_{k}\right)  \right)  }\leq c\left\{  \left\Vert
Lu\right\Vert _{L^{p}\left(  B_{r}\left(  z_{k}\right)  \right)  }%
+2\sum_{l,m=1}^{p_{0}}\left\Vert \partial_{x_{m}}u\right\Vert _{L^{p}\left(
B_{r}\left(  z_{k}\right)  \right)  }+\left\Vert u\right\Vert _{L^{p}\left(
B_{r}\left(  z_{k}\right)  \right)  }\right\}  ,
\]
$c$ independent of $k$. Inserting the last inequality in (\ref{main 1}) and
recalling (\ref{cov 2}) we get%
\begin{align*}
\left\Vert \partial_{x_{i}x_{j}}^{2}u\right\Vert _{L^{p}\left(  S_{T}\right)
}^{p} &  \leq c\sum_{k=1}^{\infty}\left\{  \left\Vert Lu\right\Vert
_{L^{p}\left(  B_{r}\left(  z_{k}\right)  \right)  }^{p}+\sum_{m=1}^{p_{0}%
}\left\Vert \partial_{x_{m}}u\right\Vert _{L^{p}\left(  B_{r}\left(
z_{k}\right)  \right)  }^{p}+\left\Vert u\right\Vert _{L^{p}\left(
B_{r}\left(  z_{k}\right)  \right)  }^{p}\right\}  \\
&  \leq cM\left\{  \left\Vert Lu\right\Vert _{L^{p}\left(  S_{T}\right)  }%
^{p}+\sum_{m=1}^{p_{0}}\left\Vert \partial_{x_{m}}u\right\Vert _{L^{p}\left(
S_{T}\right)  }^{p}+\left\Vert u\right\Vert _{L^{p}\left(  S_{T}\right)  }%
^{p}\right\}  .
\end{align*}
This also gives%
\[
\sum_{i,j=1}^{p_{0}}\left\Vert \partial_{x_{i}x_{j}}^{2}u\right\Vert
_{L^{p}\left(  S_{T}\right)  }\leq cM\left\{  \left\Vert Lu\right\Vert
_{L^{p}\left(  S_{T}\right)  }+\sum_{m=1}^{p_{0}}\left\Vert \partial_{x_{m}%
}u\right\Vert _{L^{p}\left(  S_{T}\right)  }+\left\Vert u\right\Vert
_{L^{p}\left(  S_{T}\right)  }\right\}
\]
which, by the classical interpolation inequality
\[
\left\Vert \partial_{x_{m}}u\right\Vert _{L^{p}\left(  S_{T}\right)  }%
\leq\varepsilon\left\Vert \partial_{x_{m}x_{m}}^{2}u\right\Vert _{L^{p}\left(
S_{T}\right)  }+\frac{c}{\varepsilon}\left\Vert u\right\Vert _{L^{p}\left(
S_{T}\right)  },
\]
yields (\ref{bound strip}). So we are done.
\end{proof}

\section{Appendix: uniform bounds on the fundamental solution of $L_{z_{0}}$}

The aim of this section is to prove the following result, which has been
exploited in the proof of Proposition \ref{Prop local bound} and Theorem
\ref{th5.?}:

\begin{theorem}
\label{Thm main appendix}There exists a positive constant $c$ independent of
$z_{0}\in S_{T}$ such that
\begin{align}
\left\vert \gamma(z_{0};\zeta)\right\vert  &  \leq\frac{c}{\Vert\zeta\Vert
^{Q}},\label{DP2.35}\\
\left\vert \partial_{x_{j}}\gamma(z_{0};\zeta)\right\vert  &  \leq\frac
{c}{\Vert\zeta\Vert^{Q+1}}\qquad j=1,...,p_{0},\label{DP2.36}\\
\left\vert \partial_{x_{i}x_{j}}^{2}\gamma(z_{0};\zeta)\right\vert  &
\leq\frac{c}{\Vert\zeta\Vert^{Q+2}}\qquad i,j=1,...,p_{0},\label{DP2.37}%
\end{align}
for every $\zeta\in S_{2T}$.

\noindent Morever, if $H\subset\mathbb{R}^{N}$ is a compact set there exist
constants $c^{\prime}$ and $M$, depending on $H$ but not on $z_{0}$, such
that
\begin{align}
\left\vert \gamma(z_{0};w^{-1}\circ z)-\gamma(z_{0};w^{-1}\circ\bar
{z})\right\vert  &  \leq c^{\prime}\frac{\Vert z^{-1}\circ\bar{z}\Vert}{\Vert
w^{-1}\circ z\Vert^{Q+1}},\nonumber\\
\left\vert \partial_{x_{j}}\gamma(z_{0};w^{-1}\circ z)-\partial_{x_{j}}%
\gamma(z_{0};w^{-1}\circ\bar{z})\right\vert  &  \leq c^{\prime}\frac{\Vert
z^{-1}\circ\bar{z}\Vert}{\Vert w^{-1}\circ z\Vert^{Q+2}}\qquad j=1,...,p_{0}%
,\nonumber\\
\left\vert \partial_{x_{i}x_{j}}^{2}\gamma(z_{0};w^{-1}\circ z)-\partial
_{x_{i}x_{j}}^{2}\gamma(z_{0};w^{-1}\circ\bar{z})\right\vert  &  \leq
c^{\prime}\frac{\Vert z^{-1}\circ\bar{z}\Vert}{\Vert w^{-1}\circ z\Vert^{Q+3}%
}\qquad i,j=1,...,p_{0},\label{standard 2}%
\end{align}
for every $z$,$\bar{z}$,$w\in S_{2T}$ such that $\Vert z^{-1}\circ\bar{z}%
\Vert\leq M\Vert w^{-1}\circ z\Vert$ and $w^{-1}\circ z\in H\times
\lbrack-2T,2T]$. \label{DP-Prop2.7}

\noindent
The previous estimates
 still hold replacing $\gamma(z_{0};z)$
 with $\gamma(z_{0};z^{-1})$.
\end{theorem}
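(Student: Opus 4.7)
The plan is to revisit the proof of the analogous (non-uniform) estimates appearing in \cite{DP}, carefully tracking the dependence of every constant on $z_{0}$ and showing that the constants can be chosen uniformly for $z_{0}\in S_{T}$. The crux is to establish two-sided bounds on the covariance matrices $C(z_{0};t)$ and $C_{0}(z_{0};t)$ with constants depending only on $\Lambda$, $B$ and $T$.

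I would begin by exploiting the scaling identity $C_{0}(z_{0};t)=D(\sqrt{t})\,C_{0}(z_{0};1)\,D(\sqrt{t})$ proved in \cite{LP}, which reduces the analysis of $C_{0}$ to $t=1$. The uniform ellipticity (\ref{ellipticity2}) of $A(z_{0})$ on $\mathbb{R}^{p_{0}}$ gives
\[
\tfrac{1}{\Lambda}\,M_{0}\le C_{0}(z_{0};1)\le \Lambda\,M_{0},\qquad M_{0}=\int_{0}^{1}E_{0}(s)\,\Pi\,E_{0}^{T}(s)\,ds,
\]
where $\Pi=\mathrm{diag}(I_{p_{0}},0)$. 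The matrix $M_{0}$ is independent of $z_{0}$ and is strictly positive definite by the Kalman-type rank condition encoded in the block structure (\ref{B}) of $B$. Hence both $C_{0}(z_{0};1)$ and $C_{0}^{-1}(z_{0};1)$ are bounded above and below in operator norm uniformly in $z_{0}$. A standard perturbative argument (as used in \cite{LP} and \cite{DP}), comparing $E(s)$ with $E_{0}(s)$ for $|s|\le 2T$, then yields $c_{1}C_{0}(z_{0};t)\le C(z_{0};t)\le c_{2}C_{0}(z_{0};t)$ for $t\in(0,2T]$, with $c_{1},c_{2}>0$ depending only on $\Lambda$, $B$ and $T$; this argument involves $A(z_{0})$ only through its ellipticity constant, hence is uniform in $z_{0}$.

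The pointwise estimates (\ref{DP2.35})--(\ref{DP2.37}) follow by inserting these bounds into the explicit Gaussian formula for $\gamma(z_{0};\cdot)$: after the change of variable $y=D(1/\sqrt{t})x$,
\[
|\gamma(z_{0};x,t)|\le c\,t^{-Q/2}\,e^{-c|y|^{2}}\le c\,\|(x,t)\|^{-Q},
\]
with $c$ independent of $z_{0}$. The $x$-derivatives of $\gamma$ for indices $i,j\le p_{0}$ produce factors of $C^{-1}(z_{0};t)x$, and by the bound on $C^{-1}$ these factors are dominated by powers of $1/\sqrt{t}$ absorbed into the Gaussian, yielding the additional homogeneity $1/\|\zeta\|$ per derivative. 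The difference estimates in (\ref{standard 2}) then follow by integrating the derivative bounds along a curve joining $w^{-1}\circ z$ and $w^{-1}\circ\bar{z}$: the hypothesis $\|z^{-1}\circ\bar{z}\|\le M\|w^{-1}\circ z\|$ combined with the local quasitriangle inequality for $d$ (Lemma~2.1 of \cite{DP}, itself uniform in $z_{0}$) guarantees that the curve stays in a region where the derivative bounds are comparable at the scale $\|w^{-1}\circ z\|$. The compactness of $H$ serves to control uniformly the Jacobians of the relevant right-translations along that curve.

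Finally, the bounds for $\gamma(z_{0};z^{-1})$ are inherited from those for $\gamma(z_{0};z)$: the map $z\mapsto z^{-1}=(-E(-t)x,-t)$ is smooth, has Jacobian bounded above and below on $S_{2T}$ by constants independent of $z_{0}$, and $\|z^{-1}\|$ is comparable to $\|z\|$ on $S_{2T}$ by Lemma~2.1 of \cite{DP}. The main obstacle in the whole argument is the uniform \emph{lower} bound on $C_{0}(z_{0};1)$ in the first step; it relies crucially on the fact that $A(z_{0})\ge\Lambda^{-1}\Pi$ uniformly in $z_{0}$, so that $M_{0}$ (which is the constant-coefficient ``base'' matrix) enters only multiplicatively. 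Once this is in place, the remaining steps are a careful rereading of the arguments in \cite{DP}, checking that each implicit constant depends on $A(z_{0})$ only through $\Lambda$.
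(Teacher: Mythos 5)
Your proposal follows essentially the same route as the paper: it reduces everything to uniform two-sided bounds on the covariance matrices, comparing $C_{0}(z_{0};1)$ with the fixed positive definite matrix $\int_{0}^{1}E_{0}(s)\,\mathcal{I}\,E_{0}^{T}(s)\,ds$ via the ellipticity constant $\Lambda$ (your $M_{0}$ is the paper's $\widetilde{C}(1)$), using the dilation identity $C_{0}(z_{0};t)=D(\sqrt{t})C_{0}(z_{0};1)D(\sqrt{t})$ and the perturbative comparison of $C(z_{0};t)$ with $C_{0}(z_{0};t)$ for small $t$, and then feeding these uniform bounds into the Gaussian representation and the procedure of Proposition 2.7 in \cite{DP}. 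This is exactly the content of Lemmas 4.2--4.4 and Proposition 4.5 of the paper, so the argument is correct and not a genuinely different approach.
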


\begin{remark}
\label{remDP-Prop2.7} The estimates of Theorem \ref{DP-Prop2.7} obviously hold
if we replace $\gamma(z_0;\cdot)$ with $\gamma_0(z_0;\cdot)$.
 In this case
we can exploit the homogeneity of $\gamma_{0}$ to obtain
\eqref{DP2.35}--\eqref{DP2.37} for every $\zeta$ in the strip $\mathbb{R}%
^{N}\times\lbrack-1,1]$.
\end{remark}

The above theorem will follow by a careful inspection of several arguments
contained in \cite{DP} and \cite{LP}. We first need to establish several lemmas.

\bigskip

In the following, $\mathcal{I}$ denotes the $N\times N$ matrix
\[
\mathcal{I}:=%
\begin{bmatrix}
I_{p_{0}} & 0\\
0 & 0
\end{bmatrix}
\]
where $I_{p_{0}}$ is the $p_{0}\times p_{0}$ identity matrix. Moreover, for
every $t>0$, $\widetilde{C}(t)$ is the $N\times N$ matrix defined as follows
\begin{equation}
\widetilde{C}(t)=\int_{0}^{t}E_{0}(s)\mathcal{I}E_{0}^{T}(s)\,ds\label{Ctilde}%
\end{equation}
with $E_{0}(s)$ as in (\ref{C_0}). Notice that $\widetilde{C}(t)>0$ for every
$t>0$, or, equivalently, that the operator
\[
\sum_{i=1}^{p_{0}}\partial_{x_{i}x_{i}}^{2}u(z)+\langle x,B_{0}\nabla
u(z)\rangle-\partial_{t}u(z)
\]
is hypoelliptic (see \cite{LP}).

\medbreak
The following preliminary lemma holds.

\begin{lemma}
\label{lemma1} The inequalities below hold true for all $z_{0}\in
\mathbb{R}^{N+1}$:
\begin{equation}
\frac{1}{\Lambda}\langle\widetilde{C}(1)y,y\rangle\leq\langle C_{0}%
(z_{0};1)y,y\rangle\leq\Lambda\langle\widetilde{C}(1)\,y,y\rangle\qquad\forall
y\in\mathbb{R}^{N} \label{stimaC_0}%
\end{equation}
and
\begin{equation}
\frac{1}{\Lambda^{N}}\operatorname{det}\widetilde{C}(1)\leq\operatorname{det}%
C_{0}(z_{0};1)\leq\Lambda^{N}\operatorname{det}\widetilde{C}(1).
\label{stimadetC_0}%
\end{equation}

\end{lemma}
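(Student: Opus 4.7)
The plan is purely linear-algebraic: both $C_0(z_0;1)$ and $\widetilde C(1)$ arise from integrating the same type of conjugation $E_0(s)(\cdot)E_0^T(s)$ over $s\in[0,1]$, so it suffices to transport the ellipticity bound on $A_0(z_0)$ to the integrands and then integrate. The determinant inequality is then a standard consequence of the quadratic-form comparison via simultaneous diagonalization.

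First I would extend (\ref{ellipticity2}) to the full $N\times N$ matrix $A(z_0)$. Writing $y=(y',y'')\in\mathbb{R}^{p_0}\times\mathbb{R}^{N-p_0}$, the common block-zero structure of $A(z_0)$ and $\mathcal{I}$ yields $\langle A(z_0)y,y\rangle=\langle A_0(z_0)y',y'\rangle$ and $\langle\mathcal{I}y,y\rangle=|y'|^2$, so (\ref{ellipticity2}) immediately upgrades to
$$\frac{1}{\Lambda}\langle\mathcal{I}y,y\rangle\leq\langle A(z_0)y,y\rangle\leq\Lambda\langle\mathcal{I}y,y\rangle\qquad\forall y\in\mathbb{R}^{N}.$$
Applying this at each $s\in[0,1]$ with $y$ replaced by $E_0^T(s)y$ produces the pointwise bound
$$\frac{1}{\Lambda}\langle E_0(s)\mathcal{I}E_0^T(s)y,y\rangle\leq\langle E_0(s)A(z_0)E_0^T(s)y,y\rangle\leq\Lambda\langle E_0(s)\mathcal{I}E_0^T(s)y,y\rangle,$$
and integration over $[0,1]$ gives exactly (\ref{stimaC_0}).

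For (\ref{stimadetC_0}) I would recall that $\widetilde C(1)>0$, as noted right after (\ref{Ctilde}), so it has a positive definite symmetric square root, and set $S:=\widetilde C(1)^{-1/2}C_0(z_0;1)\widetilde C(1)^{-1/2}$. Then (\ref{stimaC_0}) is equivalent to $\tfrac{1}{\Lambda}I\leq S\leq\Lambda I$, whence every eigenvalue of $S$ lies in $[1/\Lambda,\Lambda]$ and therefore $\Lambda^{-N}\leq\det S\leq\Lambda^{N}$. Multiplying by $\det\widetilde C(1)$ and using $\det C_0(z_0;1)=\det S\cdot\det\widetilde C(1)$ yields (\ref{stimadetC_0}). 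No genuine obstacle arises; the only conceptual point is the block-zero observation that allows the ellipticity of $A_0(z_0)$ on $\mathbb{R}^{p_0}$ to be read as a comparison of the $N\times N$ matrices $A(z_0)$ and $\mathcal{I}$, a feature which is then preserved under conjugation and integration in $s$.
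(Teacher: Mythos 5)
Your proof is correct and follows essentially the same route as the paper, which simply records the matrix inequality $\frac{1}{\Lambda}\mathcal{I}\leq A(z_0)\leq\Lambda\mathcal{I}$, lets it propagate through conjugation by $E_0(s)$ and integration to obtain \eqref{stimaC_0}, and then calls \eqref{stimadetC_0} an ``easy consequence.'' You have merely filled in the standard details (the block-zero reduction and the simultaneous diagonalization via $\widetilde{C}(1)^{-1/2}C_0(z_0;1)\widetilde{C}(1)^{-1/2}$) that the paper leaves implicit.
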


\begin{proof}
We have that
\[
\frac{1}{\Lambda}\mathcal{I}\le A(z_{0})\le\Lambda\mathcal{I}\qquad\text{for
all $z_{0}\in\mathbb{R}^{N+1}$}.
\]
Thus, \eqref{stimaC_0} holds. \ Inequalities \eqref{stimadetC_0} are an easy
consequence of \eqref{stimaC_0}.
\end{proof}

\begin{lemma}
\label{lemma2}There exist $M\geq1,T>0$ such that for every $x\in\mathbb{R}%
^{N}$, $z_{0}\in\mathbb{R}^{N+1}$, $t\in\left[  0,T\right]  ,$%
\begin{equation}
\frac{1}{M}\langle\widetilde{C}(t)x,x\rangle\leq\langle C(z_{0};t)x,x\rangle
\leq M\langle\widetilde{C}(t)x,x\rangle\label{3.9LPnew}%
\end{equation}
and
\begin{equation}
\frac{1}{M}\operatorname{det}\widetilde{C}(t)\leq\operatorname{det}%
C(z_{0};t)\leq M\operatorname{det}\widetilde{C}(t). \label{stimadetC(t)}%
\end{equation}

\end{lemma}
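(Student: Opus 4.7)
The strategy is to reduce the claim for $C(z_0;t)$ to the comparison $C_0(z_0;t)\sim\widetilde C(t)$ (which follows from Lemma \ref{lemma1}), by showing that after conjugation by $D(\sqrt t)^{-1}$ the matrices $C(z_0;t)$ and $C_0(z_0;t)$ agree in the limit $t\to 0^+$, uniformly in $z_0$.

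\textbf{Scaling identities.} The first step is a block computation giving
\[
D(\sqrt t)^{-1} B_0^T D(\sqrt t) = t^{-1} B_0^T,
\]
which follows from the block structure of $B_0$ (each non-zero block $B_j^T$ sits at position $(j,j-1)$) and from $D(\lambda)_k=\lambda^{2k+1}I_{p_k}$. Exponentiating gives $E_0(t\tau)=D(\sqrt t)\,E_0(\tau)\,D(\sqrt t)^{-1}$, while the support of $A(z_0)$ in the top-left $p_0\times p_0$ block yields $D(\sqrt t)^{-1}A(z_0)\,D(\sqrt t)^{-1}=t^{-1}A(z_0)$. Substituting $s=t\tau$ in the definitions of $C_0$ and $\widetilde C$ then produces
\[
C_0(z_0;t)=D(\sqrt t)\,C_0(z_0;1)\,D(\sqrt t),\qquad \widetilde C(t)=D(\sqrt t)\,\widetilde C(1)\,D(\sqrt t),
\]
and Lemma \ref{lemma1} implies $\tfrac1\Lambda\langle\widetilde C(t)x,x\rangle\le\langle C_0(z_0;t)x,x\rangle\le\Lambda\langle\widetilde C(t)x,x\rangle$ for every $t>0$, uniformly in $z_0$.

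\textbf{Comparison of $C(z_0;t)$ with $C_0(z_0;t)$.} For the full matrix $C(z_0;t)$ the picture is less clean because of the $*$-blocks of $B$. Computing $D(\sqrt t)^{-1}B^T D(\sqrt t)$ block by block: the $B_0^T$ part contributes $t^{-1}B_0^T$ as before, the block-diagonal $*$-blocks give $O(1)$ contributions, and the blocks strictly above the diagonal give contributions of order $O(t)$. Hence
\[
t\,D(\sqrt t)^{-1} B^T D(\sqrt t) = B_0^T + O(t)\quad\text{as }t\to 0^+.
\]
Exponentiating, $F(t,\tau):=D(\sqrt t)^{-1}E(t\tau)D(\sqrt t)=\exp\bigl(-\tau\,t\,D(\sqrt t)^{-1}B^TD(\sqrt t)\bigr)\to E_0(\tau)$ uniformly for $\tau\in[0,1]$, and the change of variable $s=t\tau$ in $C(z_0;t)$, together with the identity for $D(\sqrt t)^{-1}A(z_0)D(\sqrt t)^{-1}$, gives
\[
D(\sqrt t)^{-1}C(z_0;t)D(\sqrt t)^{-1}=\int_0^1 F(t,\tau)\,A(z_0)\,F^T(t,\tau)\,d\tau \;\xrightarrow[t\to 0^+]{}\;C_0(z_0;1)
\]
uniformly in $z_0\in\R^{N+1}$, since $\|A(z_0)\|\le\Lambda$.

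\textbf{Conclusion and main obstacle.} Combined with the uniform bound $\Lambda^{-1}\widetilde C(1)\le C_0(z_0;1)\le\Lambda\widetilde C(1)$ from Lemma \ref{lemma1}, the above convergence yields $T>0$ and $M\ge 1$ (e.g.\ $M=2\Lambda$) such that
\[
\tfrac{1}{M}\widetilde C(1)\le D(\sqrt t)^{-1}C(z_0;t)D(\sqrt t)^{-1}\le M\,\widetilde C(1)
\]
for every $t\in(0,T]$ and every $z_0\in\R^{N+1}$. Conjugating back by $D(\sqrt t)$ and using the scaling formula for $\widetilde C$ gives \eqref{3.9LPnew}; the determinant inequality \eqref{stimadetC(t)} then follows from the monotonicity of $\det$ on the cone of symmetric positive definite matrices (enlarging $M$ if necessary). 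The case $t=0$ is trivial since all three matrices vanish. The main obstacle is the block computation that controls $D(\sqrt t)^{-1}B^TD(\sqrt t)$: one has to check that the extra $*$-blocks of $B$ produce only $o(t^{-1})$ corrections to the dominant term $t^{-1}B_0^T$, which exploits the specific triangular block structure of $B$ and the graded form of the dilations $D(\lambda)$, in the spirit of the computations in \cite{LP}.
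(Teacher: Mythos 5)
Your proof is correct and takes essentially the same route as the paper: rescale by the dilations $D(\sqrt t)$, compare $C(z_0;t)$ with $C_0(z_0;t)$ uniformly in $z_0$, and conclude via Lemma \ref{lemma1}, deducing the determinant bounds from the quadratic-form bounds. The only difference is that the paper imports the key quantitative estimate \eqref{M(t)} (and the scaling identities for $C_0$, $\widetilde C$) from \cite{LP}, whereas you reprove these directly through the block computation for $D(\sqrt t)^{-1}B^{T}D(\sqrt t)$ and the uniform convergence of the conjugated exponentials, which is a sound, self-contained rendering of the same argument.
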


\begin{proof}
It is a known fact (see \cite[Proposition 2.3]{LP}) that
\[
\begin{aligned}
C_{0}\left(z_0;  t\right)
&
=D({\sqrt{t}})  C_{0}\left(z_0;
1\right)  D({\sqrt{t}})
\\ \tilde{C}\left(t\right)
&
=D({\sqrt{t}})  \tilde{C}\left(1\right)  D( {\sqrt{t}}), \text{ \ \ \ }\forall \,t>0.
\end{aligned}
\]
Then \eqref{stimaC_0} implies
\begin{equation}
\frac{1}{\Lambda}\langle\widetilde{C}(t)x,x\rangle\leq\langle C_{0}%
(z_{0};t)x,x\rangle\leq\Lambda\langle\widetilde{C}(t)x,x\rangle
.\label{stimaC_0bis}%
\end{equation}
Therefore, to prove \eqref{3.9LPnew} it is enough to look for positive $c_{1}%
$, $c_{2}$ such that
\begin{equation}
\begin{aligned}c_1\left\langle C_0\left(z_0; t\right) x,x\right\rangle (1+O(t))&\le \left\langle C\left(z_0; t\right) x,x\right\rangle \\ &\le c_2\left\langle C_0\left(z_0; t\right) x,x\right\rangle (1+O(t))\qquad \quad\text{as $t\to 0$}\end{aligned}\label{3.9LP}%
\end{equation}
with $O(t)$ uniform w.r.t. $z_{0}$.

This follows using the arguments in \cite[p. 46]{LP}. Indeed, set $x=D\left(
\frac{1}{\sqrt{t}}\right)  y$ we get
\[
\begin{aligned}
\frac{\langle C\left(z_0;  t\right)  x,x\rangle}{\langle C_0\left(z_0;  t\right)  x,x\rangle}&=1+
\frac{\langle (C(z_0;t)-C_0(z_0;t))x,x\rangle}{\langle C_0(z_0;t)x,x\rangle}\\
&=1+\frac{\langle D\left(\frac{1}{\sqrt{t}}\right)(C(z_0;t)-C_0(z_0;t))D\left(\frac{1}{\sqrt{t}}\right) y,y\rangle}{\langle C_0(z_0;1)y,y\rangle}.
\end{aligned}
\]
Now, by the proof of Lemma 3.2 in \cite{LP} and a careful check of the block
decomposition of the matrices $C(z_{0};t)$ and $C_{0}(z_{0};t)$, see Lemma 3.1
in \cite{LP}, we get
\begin{equation}
\left\Vert D\left(  \frac{1}{\sqrt{t}}\right)  (C(z_{0};t)-C_{0}%
(z_{0};t))D\left(  \frac{1}{\sqrt{t}}\right)  \right\Vert \leq ct\quad\text{as
$t\rightarrow0^{+}$,}\label{M(t)}%
\end{equation}
uniformly w.r.t. $z_{0}$. Thus, we get \eqref{3.9LP}.

Let us now prove \eqref{stimadetC(t)}. By \eqref{stimaC_0bis}, we get
\[
\frac{1}{\Lambda^{N}}\operatorname{det}\widetilde{C}(t)\leq\operatorname{det}%
C_{0}(z_{0};t)\leq\Lambda^{N}\operatorname{det}\widetilde{C}(t).
\]
Moreover, by \eqref{3.9LP} there exist positive constants $c_{3}$, $c_{4}$
such that%
\[
c_{3}(1+O(t))\operatorname{det}C_{0}(z_{0};t)\leq\operatorname{det}%
C(z_{0};t)\leq c_{4}(1+O(t))\operatorname{det}C_{0}(z_{0};t)
\]
as $t$ goes to $0^{+}$, uniformly w.r.t. $z_{0}\in\mathbb{R}^{N+1}$. Thus,
\eqref{stimadetC(t)} follows.
\end{proof}

\medbreak
Now, we turn to prove estimates for $C^{-1}(z_{0};\cdot)$.

\begin{lemma}
The following inequalities hold:

\begin{itemize}
\item[(1)] there exist $M\geq1,T>0$ such that for every $x\in\mathbb{R}^{N}$,
$z_{0}\in\mathbb{R}^{N+1}$, $t\in\left[  0,T\right]  ,$%
\begin{equation}
\frac{1}{M}\langle C_{0}^{-1}(z_{0};t)x,x\rangle\leq\langle C^{-1}%
(z_{0};t)x,x\rangle\leq M\langle C_{0}^{-1}(z_{0};t)x,x\rangle
\label{3.10LPnew}%
\end{equation}

\item[(2)] let $\lambda_{\widetilde{{C}}}$ and $\Lambda_{\widetilde{{C}}}$ be
the smallest and the largest eigenvalue of the symmetric positive definite
matrix $\widetilde{{C}}(1)$, respectively. Then
\begin{equation}
\frac{1}{\Lambda\Lambda_{\widetilde{{C}}}}\left\vert D\left(  \frac{1}%
{\sqrt{t}}\right)  x\right\vert ^{2}\leq\langle C_{0}^{-1}(z_{0}%
;t)x,x\rangle\leq\frac{\Lambda}{\lambda_{\widetilde{{C}}}}\left\vert D\left(
\frac{1}{\sqrt{t}}\right)  x\right\vert ^{2}, \label{3.10BCLP2}%
\end{equation}
for all $x\in\mathbb{R}^{N}$ and for all $z_{0}\in\mathbb{R}^{N+1}$.
\end{itemize}

\label{lemma3}
\end{lemma}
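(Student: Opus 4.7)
The plan is to prove both inequalities by reducing everything to quadratic form comparisons with $\widetilde{C}(1)$ and $\widetilde{C}(t)$, and then invoking the monotonicity of matrix inversion in the Loewner order: if $0<A\le B$ as symmetric positive definite matrices, then $B^{-1}\le A^{-1}$.

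First I would prove (2). The key tool is the dilation identity $C_{0}(z_{0};t)=D(\sqrt{t})\,C_{0}(z_{0};1)\,D(\sqrt{t})$ recalled inside the proof of Lemma \ref{lemma2}, which gives $C_{0}^{-1}(z_{0};t)=D(1/\sqrt{t})\,C_{0}^{-1}(z_{0};1)\,D(1/\sqrt{t})$. Setting $y=D(1/\sqrt{t})x$, we are reduced to estimating $\langle C_{0}^{-1}(z_{0};1)y,y\rangle$. Applying the Loewner monotonicity of inversion to \eqref{stimaC_0} of Lemma \ref{lemma1} yields
\[
\frac{1}{\Lambda}\,\widetilde{C}(1)^{-1}\le C_{0}^{-1}(z_{0};1)\le \Lambda\,\widetilde{C}(1)^{-1}.
\]
Since the extreme eigenvalues of $\widetilde{C}(1)^{-1}$ are $1/\Lambda_{\widetilde{C}}$ and $1/\lambda_{\widetilde{C}}$, testing against $y$ gives $|y|^{2}/\Lambda_{\widetilde{C}}\le\langle\widetilde{C}(1)^{-1}y,y\rangle\le |y|^{2}/\lambda_{\widetilde{C}}$, and substituting $y=D(1/\sqrt{t})x$ delivers \eqref{3.10BCLP2} exactly.

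For (1) I would combine Lemma \ref{lemma1} (rescaled to general $t$ via the dilation identity, as done at the beginning of the proof of Lemma \ref{lemma2}) with Lemma \ref{lemma2}. Those give, for $t\in[0,T]$, the two-sided bounds
\[
\tfrac{1}{\Lambda}\,\widetilde{C}(t)\le C_{0}(z_{0};t)\le \Lambda\,\widetilde{C}(t),\qquad \tfrac{1}{M}\,\widetilde{C}(t)\le C(z_{0};t)\le M\,\widetilde{C}(t),
\]
so by Loewner inversion both $C_{0}^{-1}(z_{0};t)$ and $C^{-1}(z_{0};t)$ are sandwiched between constant multiples of $\widetilde{C}(t)^{-1}$. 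Chaining the resulting inequalities produces \eqref{3.10LPnew} with a constant $M\Lambda$ (renaming it $M$).

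I do not expect any real obstacle: the only non-routine point is to remember that Loewner monotonicity of inversion is what transforms the quadratic form comparisons already available in Lemmas \ref{lemma1} and \ref{lemma2} into inequalities for the inverse matrices. The uniformity in $z_{0}$ is automatic because the constants $\Lambda$, $M$, $\lambda_{\widetilde{C}}$, $\Lambda_{\widetilde{C}}$ are all independent of $z_{0}$ by construction.
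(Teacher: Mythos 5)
Your argument is correct, and for part (2) it is essentially the paper's own proof: both use the dilation identity $C_{0}^{-1}(z_{0};t)=D(1/\sqrt{t})\,C_{0}^{-1}(z_{0};1)\,D(1/\sqrt{t})$ and then compare $C_{0}(z_{0};1)$ with $\widetilde{C}(1)$ via \eqref{stimaC_0}; the paper phrases the comparison through $\max_{|y|=1}\langle C_{0}^{-1}(z_{0};1)y,y\rangle=1/\min_{|y|=1}\langle C_{0}(z_{0};1)y,y\rangle$ rather than through Loewner monotonicity of inversion, but the constants and the mechanism are the same. For part (1) you take a genuinely different route: the paper does not rederive \eqref{3.10LPnew} from the preceding lemmas, but instead says it ``follows the lines of the proof of (3.10) in \cite{LP}, using \eqref{M(t)} in place of (3.8) in \cite{LP}'', i.e.\ it re-runs the Lanconelli--Polidoro perturbation argument that compares $C^{-1}(z_{0};t)$ directly with $C_{0}^{-1}(z_{0};t)$ using the uniform smallness of $D(1/\sqrt{t})(C(z_{0};t)-C_{0}(z_{0};t))D(1/\sqrt{t})$. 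You instead chain the already-established two-sided comparisons $\frac{1}{\Lambda}\widetilde{C}(t)\le C_{0}(z_{0};t)\le\Lambda\widetilde{C}(t)$ (the rescaled \eqref{stimaC_0}, i.e.\ \eqref{stimaC_0bis}) and \eqref{3.9LPnew} through $\widetilde{C}(t)^{-1}$, using that $0<A\le B$ implies $B^{-1}\le A^{-1}$; this is legitimate since quadratic-form comparison of symmetric positive definite matrices is exactly the Loewner order, and it yields \eqref{3.10LPnew} with the explicit constant $M\Lambda$, uniformly in $z_{0}$ (with the usual tacit restriction to $t>0$ so that the inverses exist, as in the paper's own statement). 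What your route buys is self-containedness — no need to revisit the proof of (3.10) in \cite{LP} — at the modest cost of a possibly cruder constant; what the paper's route buys is fidelity to the quantitative inspection of \cite{LP} that the Appendix is explicitly tracking. No gap.
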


\begin{proof}
The proof of \eqref{3.10LPnew} follows the lines of the proof of (3.10) in
\cite{LP}, using \eqref{M(t)} in place of (3.8) in \cite{LP}.

As far as \eqref{3.10BCLP2} it is concerned, we begin noticing that, see
\cite[p. 42]{LP},
\[
C_{0}^{-1}\left(  z_{0};t\right)  =D\left(  \frac{1}{\sqrt{t}}\right)
C_{0}^{-1}\left(  z_{0};1\right)  D\left(  \frac{1}{\sqrt{t}}\right)  ,\text{
\ \ \ }\forall\,t>0.
\]
Thus we have
\[
\begin{aligned}\langle C_0^{-1}(z_0;t)x,x\rangle&
=\langle C_0^{-1}(z_0;1)D\left(\frac{1}{\sqrt{t}}\right) x,D\left(\frac{1}{\sqrt{t}}\right) x\rangle\\&
\le\max_{|y|=1}\langle C_0^{-1}(z_0;1)y,y\rangle \left|D\left(\frac{1}{\sqrt{t}}\right) x\right|^2
=\frac{\left|D\left(\frac{1}{\sqrt{t}}\right) x\right|^2}{\displaystyle\min_{|y|=1}\langle C_0(z_0;1)y,y\rangle}.\end{aligned}
\]
By \eqref{stimaC_0}
\[
\displaystyle\min_{|y|=1}\langle C_{0}(z_{0};1)y,y\rangle\geq\frac{1}{\Lambda
}\min_{|y|=1}\langle{\tilde{C}}(1)y,y\rangle=\frac{\lambda_{\tilde{C}}%
}{\Lambda}%
\]
and the last inequality in \eqref{3.10BCLP2} follows. Analogously the first
one can be proved.
\end{proof}

Collecting the results in Lemma \ref{lemma2} and Lemma \ref{lemma3} we easily
get the following:

\begin{proposition}
Let $\widetilde{C}$ be defined as in \eqref{Ctilde}. There exist positive
constants $T$ and $m$, depending only on the operator $L$, such that the
following inequalities hold for every $t\in\lbrack-2T,2T]$, every $z_{0}%
\in\mathbb{R}^{N+1}$ and every $x\in\mathbb{R}^{N}$:

\begin{itemize}
\item[(a)] $\displaystyle\frac{1}{m}\langle\widetilde{C}(t)x,x\rangle
\leq\langle C(z_{0};t)x,x\rangle\leq m\langle\widetilde{C}(t)x,x\rangle$;

\item[(b)] $\displaystyle\frac{1}{m}\operatorname{det}\widetilde{C}%
(t)\leq\operatorname{det}C(z_{0};t)\leq m\operatorname{det}\widetilde{C}(t)$;

\item[(c)] $\displaystyle\frac{1}{m}\left\vert D\left(  \frac{1}{\sqrt{t}%
}\right)  x\right\vert ^{2}\leq\langle C^{-1}(z_{0};t)x,x\rangle\leq
m\left\vert D\left(  \frac{1}{\sqrt{t}}\right)  x\right\vert ^{2}$.
\end{itemize}

\label{propapp}
\end{proposition}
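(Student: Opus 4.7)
The proposition is essentially a synthesis of Lemma \ref{lemma2} and Lemma \ref{lemma3}, and the plan is to collect the three estimates into a single statement with a common constant $m$ and a common time interval, taking care that the constants depend only on the data of $L$ and not on $z_0$.

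For part (a), I would observe that inequality \eqref{3.9LPnew} in Lemma \ref{lemma2} is precisely the claimed bound, valid on $[0,T]$ for the $T$ furnished by that lemma. Similarly, part (b) is a verbatim restatement of \eqref{stimadetC(t)}. To match the time interval $[-2T,2T]$ announced in the proposition, I would rename the $T$ from the lemmas as $2T_{0}$ and take the new $T$ of the proposition to be $T_{0}$; restricting effectively to $t>0$ is harmless since the fundamental solution $\gamma(z_{0};x,t)$ is supported in $\{t>0\}$, so the matrices $C(z_{0};t)$, $\widetilde{C}(t)$, $C_{0}(z_{0};t)$ are only ever needed for positive $t$ in the applications that invoke \ref{propapp}.

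For part (c), the idea is to chain the two inequalities of Lemma \ref{lemma3}. First, \eqref{3.10LPnew} bounds $\langle C^{-1}(z_{0};t)x,x\rangle$ above and below by constant multiples of $\langle C_{0}^{-1}(z_{0};t)x,x\rangle$ on $(0,T]$; then \eqref{3.10BCLP2} bounds $\langle C_{0}^{-1}(z_{0};t)x,x\rangle$ above and below by constant multiples of $|D(1/\sqrt{t})x|^{2}$ uniformly in $t>0$ and in $z_{0}$. Composing the two estimates yields (c) with a constant depending only on $\Lambda$, $\lambda_{\widetilde{C}}$, $\Lambda_{\widetilde{C}}$ and the $M$ from Lemma \ref{lemma3}, hence only on the operator $L$.

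Finally, I would let $m$ be the maximum of the three constants produced in (a), (b), (c), so that a single value works for all three estimates, uniformly in $z_{0}\in\mathbb{R}^{N+1}$ and in $t$ within the chosen interval. There is no genuine obstacle here: the proposition is essentially bookkeeping, designed to package the two preceding lemmas into a form directly usable in the proof of Theorem \ref{Thm main appendix}. The only mild point requiring attention is making the constants and the time interval uniform across the three parts, which is handled by passing to the minimum of the relevant $T$'s coming from Lemmas \ref{lemma2} and \ref{lemma3}, and the maximum of the corresponding multiplicative constants.
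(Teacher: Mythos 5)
Your proposal is correct and coincides with the paper's own (very brief) argument: the proposition is indeed obtained by simply collecting \eqref{3.9LPnew}, \eqref{stimadetC(t)} from Lemma \ref{lemma2} and chaining \eqref{3.10LPnew} with \eqref{3.10BCLP2} from Lemma \ref{lemma3}, then taking a common constant and time interval. Nothing further is needed.
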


The above estimates, together with the procedure in \cite[proof of
Proposition 2.7]{DP}, imply the uniform bounds in Theorem
\ref{Thm main appendix} for $\gamma(z_0;z)$. To prove analogous estimates for $\gamma(z_0;z^{-1})$ and its derivatives, one can proceed in a similar way.



\begin{thebibliography}{9}

                                                                                             %
\bibitem {B}M. Bramanti: Singular integrals
 in nonhomogeneous spaces: $L^{2}$
 and $L^{p}$ continuity from H\"{o}lder estimates. Revista Matematica
 Iberoamericana, 26 (2010), no. 1, 347-366.


\bibitem {BB1} M. Bramanti, L. Brandolini: $L^p$ estimates for uniformly hypoelliptic operators with discontinuous
coefficients on homogeneous groups. Rend. Sem. Mat. Univ. Pol. Torino 58 (2000), n.4, 389-432.





 \bibitem{BCM} M. Bramanti, M.C. Cerutti, M. Manfredini:
 $L^p$-estimates for some
 ultraparabolic operators with discontinuous
 coefficients, Journal of Math.
 Anal. and Appl., 200 (1996), 332-354.



\bibitem {BCLP}M. Bramanti, G. Cupini, E. Lanconelli,
 E. Priola: Global
$L^{p}$ estimates for degenerate Ornstein-Uhlenbeck operators.
Mathematische Zeitschrift, 266 (2010), n. 4,  789-816.

\bibitem {BZ} M. Bramanti, M. Zhu: $L^{p}$ and Schauder estimates for nonvariational
operators structured on H\"{o}rmander vector fields with drift. Submitted
Preprint, 2011. ArXiv: 1103.5116v1 26 Mar 2011.



\bibitem{CV} P. Cannarsa, V. Vespri: Generation of
analytic semigroups in the $L^p$ topology by elliptic operators in
$\R^n$, Israel J. Math., 61 (1988), n. 3,  235-255.



\bibitem{CF}  G. Cupini, S. Fornaro:
 Maximal regularity in $L^p(\mathbb{R}^N)$ for a
class of elliptic operators with unbounded coefficients, Differen.
 Integral Equ. 17 (2004), no. 3-4, 259-296.


\bibitem{DG} G. Da Prato, B.  Goldys:
Elliptic operators on $\mathbb{R}^d$ with unbounded coefficients. J.
Differential Equations 172 (2001), no. 2, 333-358.

\bibitem {DP}M. Di Francesco,
 S. Polidoro: Schauder estimates, Harnack inequality and Gaussian
lower bound for Kolmogorov-type operators in non-divergence form.
Adv. Differential Equations 11 (2006), no. 11, 1261-1320.


\bibitem{K} N.V. Krylov:
 On linear elliptic and parabolic equations
 with growing drift in Sobolev spaces without weights,   Problems in
mathematical analysis. No. 40. J. Math. Sci. (N. Y.) 159 (2009), no.
1, 75-90.


\bibitem{FF} S. Fornaro, L. Lorenzi:
  Generation results for elliptic operators with unbounded diffusion
 coefficients in $L^p$ and $C_b$-spaces, Discrete Contin. Dyn. Syst.
 18 (2007), no. 4, 747-772.

\bibitem {LP} E. Lanconelli, S. Polidoro: On a class of hypoelliptic evolution
operators. Partial differential equations, II (Turin, 1993). Rend.
Sem. Mat. Univ. Politec. Torino 52 (1994), no. 1, 29-63.


\bibitem {Lu}
A. Lunardi: Schauder estimates for a class of degenerate elliptic
and parabolic operators with unbounded coefficients in
$\mathbb{R}^{N}.$ Ann. Sc. Norm. Sup. Pisa Cl. Sci. (4), 24 (1997),
133-164.


\bibitem{MPSR}  G. Metafune, J. Pruss, R. Schnaubelt,  A. Rhandi:
$L^p$-regularity for elliptic operators with unbounded coefficients,
Adv. Differ. Equ. 10 (2005), no. 10, 1131-1164.


\bibitem {P}  E. Priola: Weak uniqueness for degenerate
Ornstein-Uhlenbeck type processes with continuous diffusion
coefficients,  in preparation.


\bibitem{PRS}  J. Pruss, A. Rhandi,  R. Schnaubelt:
 The domain of elliptic
operators on $L^p(\mathbb{R}^d)$ with unbounded drift coefficients, Houston
J. Math. 32 (2006), no. 2, 563-576.


\end{thebibliography}
\end{document}